\newtheorem{theorem}{Theorem}
\newtheorem{corollary}[theorem]{Corollary}
\theoremstyle{definition}
\newtheorem{remark}{Remark}
\newcommand {\myvec}[1] {{\mbox{\boldmath $#1$}}}
\newcommand{\eqref}[1]{~(\ref{#1})}
\def\mod{\mathop{\rm mod}}
\newcommand{\spnorm}[1]{\left\Vert#1\right\Vert_{\mathsf{sp}}}
\def\argmin{\mathop{\rm argmin}}
\def\EE{\Expect}
\def\Var{\mathrm{Var}}
\def\PP{\mathbb{P}}
\def\eqdef{\triangleq}
\def\Risk{\mathsf{Risk}}
\newcommand{\bsym}[1]{\boldsymbol{#1}}
\newcommand{\reals}{\mathbb{R}}
\newcommand{\Expect}{\mathbb{E}}
\newcommand{\pth}[1]{\left( #1 \right)}
\newcommand{\qth}[1]{\left[ #1 \right]}
\newcommand{\sth}[1]{\left\{ #1 \right\}}
\newcommand{\abs}[1]{\left| #1 \right|}
\newcommand{\bs}[1]{\boldsymbol{#1}}
\definecolor{myblue}{rgb}{.8, .8, 1}
\definecolor{mathblue}{rgb}{0.2472, 0.24, 0.6} 
\definecolor{mathred}{rgb}{0.6, 0.24, 0.442893}
\definecolor{mathyellow}{rgb}{0.6, 0.547014, 0.24}
\newcommand{\sfT}{{\mathsf{T}}}
\newcommand{\calN}{{\mathcal{N}}}
\newcommand{\calS}{{\mathcal{S}}}
\def\mA{{\bm{A}}}
\def\mB{{\bm{B}}}
\def\mC{{\bm{C}}}
\def\mI{{\bm{I}}}
\def\mQ{{\bm{Q}}}
\def\mX{{\bm{X}}}
\def\1{\bm{1}}
\def\ervz{{\textnormal{z}}}
\def\va{{\bm{a}}}
\def\vb{{\bm{b}}}
\def\vc{{\bm{c}}}
\def\vq{{\bm{q}}}
\def\vu{{\bm{u}}}
\def\vv{{\bm{v}}}
\def\vx{{\bm{x}}}
\def\vy{{\bm{y}}}
\def\vz{{\bm{z}}}
\def\mA{{\bm{A}}}
\def\mB{{\bm{B}}}
\def\mC{{\bm{C}}}
\def\mI{{\bm{I}}}
\def\mQ{{\bm{Q}}}
\def\mX{{\bm{X}}}
\DeclareMathAlphabet{\mathsfit}{\encodingdefault}{\sfdefault}{m}{sl}
\SetMathAlphabet{\mathsfit}{bold}{\encodingdefault}{\sfdefault}{bx}{n}
\def\sS{{\mathbb{S}}}
\providecommand{\keywords}[1]{\textbf{Keywords:} #1}
\begin{document}

\title{Support recovery with Projected Stochastic Gates: theory and application for linear models}


\author{Soham Jana, Henry Li, Yutaro Yamada, Ofir Lindenbaum \thanks{
			S.~Jana is with the Department of Operations Research and Financial Engineering, Princeton University, Princeton, NJ, USA, email: \url{soham.jana@princeton.edu}. Y.~Yamada is with the Department of Statistics and Data Science, Yale University, New Haven, CT, email: \url{yutaro.yamada@yale.edu}. H.~Li is with the Department of Applied Mathematics, Yale University, New Haven, CT, email: \url{henry.li@yale.edu}. O.~Lindenbaum is with the Faculty of Engineering, Bar-Ilan University, Ramat Gan, Israel, email: \url{ofir.lindenbaum@biu.ac.il}.}
			}

\maketitle  
\begin{abstract}
Consider the problem of simultaneous estimation and support recovery of the coefficient vector in a linear data model with additive Gaussian noise. We study the problem of estimating the model coefficients based on a recently proposed non-convex regularizer, namely the stochastic gates (STG) \cite{yamada20a}. We suggest a new projection-based algorithm for solving the STG regularized minimization problem, and prove convergence and support recovery guarantees of the STG-estimator for a range of random and non-random design matrix setups. Our new algorithm has been shown to outperform the existing STG algorithm and other classical estimators for support recovery in various real and synthetic data analyses.  \\

\noindent\keywords{Support recovery, sparsity, feature selection, consistency, non-convex penalty, LASSO, SCAD, orthogonal matching pursuit, stochastic gates, compressed sensing}

\end{abstract}






    
    

\section{Introduction}
We study the following question: given observations from a noisy linear model, how can we recover the positions of the non-zero entries of the unknown coefficient vector (also known as the {\it sparsity pattern} or {\it support-set} \cite{wainwright09})? The analysis of the above problem, which is often referred to as sparse recovery, has seen many uses across fields ranging from theoretical computer science to applied mathematics to digital signal processing. Examples include compressed sensing \cite{donoho2006compressed}, image denoising \cite{elad2006image,zhao2014hyperspectral}, manifold learning \cite{guo2010action,DUFS}, etc. 
One popular solution for sparse recovery is the Least Absolute Shrinkage and Selection Operator (LASSO) \cite{tibshirani1996regression,wainwright09}, which has several extensions, such as \cite{irls,irl1,isd}. Greedy methods, such as Orthogonal Matching Pursuit (OMP) \cite{omp}, Randomized OMP \cite{randomp} and their extensions \cite{elad2009plurality,stomp,cosamp}, are well studied in this context. Several authors \cite{simon2019mmse,lindenbaum2021randomly,lindenbaum2021refined} have recently demonstrated that introducing controlled noise in the optimization process can lead to improved model performance. 

Another interesting avenue of research on this topic involves the analysis of non-convex penalties that approximate the sparsity constraints in the objective function. Examples of such penalties include the well-known smoothly clipped absolute deviation (SCAD) \cite{scad,FFW09,XH09,PL09}. Even though estimators based on conventional convex penalties (e.g., LASSO) come with computational benefits, it is shown in many applications that non-convex penalties can yield significantly improved performances (see \cite{MFJH11,WCLQ18} for detailed discussions). In recent years many new non-convex penalties have surfaced in the study of sparse Neural Networks \cite{yamada20a,jang2016categorical,maddison2016concrete,louizos2017learning}. Estimators based on these new penalties show promising real and synthetic data analysis performances. However, analysis of their properties is sometimes difficult due to the complex nature of the models and penalties.

In our current work, we are interested in analyzing the non-convex-penalty-based estimator of the stochastic gates (STG) proposed in \cite{yamada20a}. The STG is an ``Embedded method" for support recovery that simultaneously estimates the support and coefficient values and hence works as an improvement over ``Filter methods" (which attempt to remove irrelevant features before learning a model) and ``Wrapper methods" (which recompute the model for each subset of features and, thus, become computationally expensive). The readers are directed to \cite[Section 1]{yamada20a} for a detailed list of references for classical support-recovery techniques. Using information-theoretic arguments, \cite[Section 4]{yamada20a} justified that solving the optimization problem for constrained subset selection is equivalent to optimizing the parameters of the Bernoulli gates for the features and argued why the non-convex penalty used for computing the STG estimates should achieve that objective. In addition, the objective function for computing the STG estimates also uses a noisy and continuous version of the discrete Bernoulli gates, with the hope of obtaining improved model performances as mentioned before. Indeed, the STG has been shown to produce competitive performances in various simulation studies. In the case of non-linear models, \cite{yamada20a} studied the Cox Proportional Hazard Model and Noisy binary XOR classification models, and showed that the STG-based estimators provide more accurate and stable performances compared to alternative methods based on Random Forests, Hard-Concrete penalty, the Lasso, and a deterministic non-convex counterpart of the STG. In the case of feature selection based on linear models with the squared error loss, they show that while the STG and Hard-Concrete-penalty based estimators perform better support recovery than the Lasso, the estimates provided by the STG have lower variances. This low variance is usually attributed to the lighter tail of the Gaussian-based penalty of the STG, compared to the heavy-tailed logistic-based penalty of the Hard-Concrete method. Despite these practical benefits, we note that there is scope for improving the existing STG algorithm in the linear setup, which can lead to improving the approximates of the STG estimator. The existing STG algorithm is mainly built for tackling a general class of data generating models and loss functions and relies fully on the gradient descent algorithm. However, borrowing ideas from the linear model theory, we show that in certain steps for estimating the coefficient vectors in the STG algorithm, with high probability, it is possible to provide closed-form optimal solutions. As expected, it has been shown later in our work that the new algorithm, termed the Projected-STG method, provides support recovery guarantees that are at least as good as the existing STG algorithm of \cite{yamada20a} and various other baseline methods in real and simulated data examples. Moreover, the performance of the Projected-STG is significantly better than that of its competitors when the signal-to-noise ratio is smaller. In addition, under benign conditions, we provide basic consistency guarantees for the STG algorithm which was previously missing from the literature.

The rest of the paper is organized as follows. In \prettyref{sec:problem} we define our notations and describe the optimization problem. In \prettyref{sec:algorithm} we describe the Projected-STG algorithm for linear models. \prettyref{sec:theory} contains some asymptotic results about the STG estimator in both random and fixed design matrix setups. Finally, we end our discussion with simulation studies in \prettyref{sec:simulation}.


\subsection{Problem Setting}
\label{sec:problem}
Let $\mX=[\vx_1,\dots,\vx_N]^\sfT$ be a set of $N$ input vectors of dimension $D$ each and suppose that data is generated via the linear model 
\begin{align}\label{eq:main-model}
	\vy=\mX\bm\beta^*+\bm\epsilon,\quad 
	\bm\epsilon\sim \calN(0,\sigma^2\mI_N),
\end{align}
where $\mI_N$ is an identity matrix of dimension $N$, $\calN(0,\sigma^2\mI_N)$ denotes a multivariate Gaussian distribution with mean $0$ and covariance $\sigma^2\mI_N$, and $\sigma$ is assumed to be known. For any vector $\vv$, let $\|\vv\|_0$ denote the sparsity (i.e., the number of nonzero entries) of $\vv$ and $\|\vv\|_2$ denotes the Euclidean norm of $\vv$. When it is known beforehand that the support-set of $\beta^*$ has at most $K$ elements the optimization problem with the squared error loss translates to finding
\begin{align}\label{eq:l0-problem}
	\argmin_{\bs\beta} \frac{\|\mX \bm\beta - \vy\|^2_2}N \; \text{such that $\|\bm\beta\|_0 \leq K$}.
\end{align}
Due to the intractability of the above problem (as it involves a discrete search over an exponential number of possible parameter vectors, see \cite{natarajan1995sparse,garey1979computers} for more details) it is general practice to solve instead some penalized objective that approximates the sparsity constrained problem. The idea of penalization via \textit{gates} uses separate random variables that work as filters on the parameters. Consider the re-parameterization of $\bm\beta$ via random variables $\vz=\left\{z_d\right\}_{d=1}^D$ which we refer to as gates
\begin{align*}
	\beta_d=\theta_dz_d,\quad
	\theta_d\neq 0,\quad
	z_d\in [0,1].
\end{align*}
As $\|\bm\beta\|_0=\|\vz\|_0$, one often considers the following regularized least square formulation of \eqref{eq:l0-problem} for estimating the parameters(here $\EE_{\vz}$ denotes expectation with respect to distribution of $\vz$)
\begin{align}\label{eq:gated-objective}
	\EE_{\vz}\qth{\frac{\|\mX(\bm\theta\odot\vz)-\vy\|_2^2}N +\lambda_N\|\vz\|_0},
\end{align}
where $\odot$ denotes the coordinatewise product
$$(a_1,\dots,a_D)\odot (b_1,\dots,b_D)=(a_1b_1,\dots,a_Db_D).$$
This $\odot$ product is also known as the Hadamard product in the literature; see \cite{Hadamard} for historical uses. The regularizor $\lambda_N=\lambda_N(K)$ depends on $N,K$ and is usually chosen based on cross-validation prior to learning the parameters. The optimization of the above risk function is done over $\bm\theta$ and the parameters controlling the distribution of $\vz$. Even though the Bernoulli distribution seems a natural choice for modeling $\vz$, minimization over discrete distributions is usually difficult and it is natural to consider continuous relaxation of the Bernoulli distribution. Examples of such relaxations include Concrete distribution \cite{jang2016categorical,maddison2016concrete}, Hard-Concrete distribution \cite{louizos2017learning} etc. In recent work, \cite{yamada20a} discussed that the previous two distributions induce high variance in the corresponding Bernoulli approximations and as an alternative proposed the STG, a differentiable relaxation of the Bernoulli gates, given by $\vz=\vz(\bs\mu) = \sth{\ervz_d(\mu_d)}_{d=1}^D$ with
\begin{equation}
	\ervz_d(\mu_d) = \max(0, \min(1, \mu_d + {\delta}_d)),
	\quad {\delta}_d \sim \mathcal{N}(0, \tau^2),
\end{equation}
where $\tau>0$ is fixed throughout training. In view of this specification, the objective in \eqref{eq:gated-objective} can be written as
\begin{align}
	\mathsf{Risk}(\bs\theta,\bs\mu;\lambda_N,\tau)
	&=\frac{\EE_{\vz}\qth{\|\vy-\mX(\bs\theta\odot \vz(\bs \mu))\|_2^2}}N
	+\lambda_N\sum_{d=1}^{D}\Phi\pth{\frac{\mu_d}{\tau}} \label{eq:risk}
\end{align}
where $\Phi$ is the standard normal cumulative distribution function. We minimize the above Risk with respect to $\bs\theta,\bs\mu$ to get $\hat{\bs\theta}, \hat{\bs \mu}$. The estimate of $\bs\beta^*$ is then provided by $\hat{\bs\theta}\odot \vz(\hat{\bs \mu})$.

\section{The Projected-STG algorithm}
\label{sec:algorithm}

Turning to computational aspects we note that the usual algorithm for obtaining minimizer of $\Risk(\bm\theta,\bm\mu;\lambda_N,\tau)$ \cite[Algorithm 1]{yamada20a} updates $(\bm\theta,\bm\mu)$ via simultaneous gradient descent on the parameters until convergence. The algorithm proposed by \cite{yamada20a} is designed for finding a subset of informative features under a general non-linear model. For the special case of linear models, it turns out that at each stage of the parameter update scheme, instead of changing $\bs\theta$ with a gradient direction, we can directly find the optimum choice via solving a simple quadratic objective. When the design matrix has full rank the optimal choice has a closed form expression that is similar to the classical projection-based estimator of $\beta^*$ given by $\hat{\bs\beta}=(\bm\mX^\sfT\bm\mX)^{-1}\bm\mX^\sfT \vy$.

\begin{theorem}\label{thm:optim-theta}
	Given fixed $\bm\mu,\bm\mX$, the minimizer $\hat {\bsym \theta}$ of \eqref{eq:risk} also minimizes
	$${\bm\theta}^{{\sfT}}\pth{\mX^{{\sfT}}\mX\odot \EE_\vz\qth{\vz(\bm\mu)\vz(\bm\mu)^\sfT}}{\bm\theta}
	-2{\bm\theta}^{{\sfT}}
	\pth{(\mX^{{\sfT}}\vy)\odot \EE_\vz[\vz(\bm\mu)]}.$$ 
	If $\mX$ has full column rank, then, $\mX^{{\sfT}}\mX\odot \EE_\vz\qth{\vz(\bm\mu)\vz(\bm\mu)^\sfT}$ is invertible and the above quadratic has a unique minimizer
	$$\hat{\bm\theta}=\pth{\mX^{{\sfT}}\mX\odot \EE_\vz\qth{\vz(\bm\mu)\vz(\bm\mu)^\sfT}}^{-1}\pth{(\mX^{{\sfT}}\vy)\odot \EE_\vz[\vz(\bm\mu)]}.$$
\end{theorem}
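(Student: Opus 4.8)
The plan is to separate the statement into three parts: (i) reduce the minimization of $\Risk(\bs\theta,\bs\mu;\lambda_N,\tau)$ over $\bs\theta$ (with $\bs\mu,\mX$ held fixed) to the stated quadratic program; (ii) show that its Hessian $Q:=\mX^\sfT\mX\odot\EE_\vz[\vz(\bs\mu)\vz(\bs\mu)^\sfT]$ is positive definite whenever $\mX$ has full column rank; and (iii) read off the unique minimizer of a strictly convex quadratic.

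For (i), note first that with $\bs\mu$ fixed the penalty $\lambda_N\sum_d\Phi(\mu_d/\tau)$ and the factor $1/N$ are constants, so only the expected least-squares term matters. Writing $\bs\theta\odot\vz(\bs\mu)=\diag(\vz(\bs\mu))\bs\theta$ and expanding,
$$\|\vy-\mX\diag(\vz)\bs\theta\|_2^2=\|\vy\|_2^2-2\bs\theta^\sfT\diag(\vz)\mX^\sfT\vy+\bs\theta^\sfT\diag(\vz)\mX^\sfT\mX\diag(\vz)\bs\theta,$$
I would push $\EE_\vz$ inside (legitimate since $\vz\in[0,1]^D$ makes every entry bounded). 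The linear term becomes $\bs\theta^\sfT\bigl((\mX^\sfT\vy)\odot\EE_\vz[\vz(\bs\mu)]\bigr)$ because the $d$-th entry of $\EE_\vz[\diag(\vz)]\mX^\sfT\vy$ equals $\EE[z_d]\,(\mX^\sfT\vy)_d$, and the quadratic term becomes $\bs\theta^\sfT Q\bs\theta$ because the $(i,j)$ entry of $\EE_\vz[\diag(\vz)\mX^\sfT\mX\diag(\vz)]$ equals $(\mX^\sfT\mX)_{ij}\,\EE[z_iz_j]=(\mX^\sfT\mX)_{ij}\,(\EE_\vz[\vz\vz^\sfT])_{ij}$. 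Dropping the additive constant $\|\vy\|_2^2$ leaves exactly the displayed quadratic, so its minimizer coincides with that of \eqref{eq:risk} over $\bs\theta$.

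For (ii), I would observe $\mX^\sfT\mX\succ0$ by the full-column-rank assumption and $\EE_\vz[\vz\vz^\sfT]\succeq0$ since $a^\sfT\EE_\vz[\vz\vz^\sfT]a=\EE_\vz[(a^\sfT\vz)^2]\ge0$ for all $a$. The Schur product theorem then gives $Q\succeq0$; to upgrade to $Q\succ0$ I would use the standard refinement that $A\succ0$, $B\succeq0$, and $b_{dd}>0$ for every $d$ together imply $A\odot B\succ0$. This follows by writing $B=\sum_k\lambda_k v_kv_k^\sfT$ with $\lambda_k>0$, so $x^\sfT(A\odot B)x=\sum_k\lambda_k(x\odot v_k)^\sfT A(x\odot v_k)\ge0$; if the sum vanishes then $x\odot v_k=0$ for every $k$, and if some coordinate $x_i\ne0$ then $(v_k)_i=0$ for all $k$, forcing $b_{ii}=\sum_k\lambda_k(v_k)_i^2=0$, a contradiction. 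The required hypothesis $b_{dd}=\EE_\vz[z_d(\mu_d)^2]>0$ holds because $z_d(\mu_d)=\max(0,\min(1,\mu_d+\delta_d))$ is nonnegative and strictly positive on the event $\{\mu_d+\delta_d>0\}$, which has positive probability since $\delta_d\sim\calN(0,\tau^2)$ with $\tau>0$. Hence $Q\succ0$, and in particular $Q$ is invertible.

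Part (iii) is immediate: $f(\bs\theta)=\bs\theta^\sfT Q\bs\theta-2\bs\theta^\sfT b$ with $b=(\mX^\sfT\vy)\odot\EE_\vz[\vz(\bs\mu)]$ is strictly convex when $Q\succ0$, so its unique stationary point $\nabla f(\bs\theta)=2Q\bs\theta-2b=0$, i.e. $\hat{\bs\theta}=Q^{-1}b$, is the global minimizer. I expect the only non-mechanical step to be part (ii): pinning down why the diagonal of $\EE_\vz[\vz\vz^\sfT]$ is strictly positive (this is exactly where the Gaussian perturbation with $\tau>0$ is used) and invoking the right strengthening of the Schur product theorem. Parts (i) and (iii) are routine bookkeeping with expectations, the Hadamard product, and a convex quadratic.
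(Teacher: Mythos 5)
Your proof is correct and follows essentially the same route as the paper: reduce the risk (for fixed $\bs\mu$) to the stated quadratic — your entrywise computation with $\diag(\vz)$ is equivalent to the paper's trace manipulation — then certify that the Hadamard-product Hessian is positive definite, and solve the strictly convex quadratic. The one point of divergence is the positive-definiteness step: the paper shows that $\EE_\vz\qth{\vz\vz^\sfT}$ is itself strictly positive definite, using independence of the gates and $\Var(z_d)>0$ to write $\vu^\sfT\EE_\vz\qth{\vz\vz^\sfT}\vu=\sum_d u_d^2\Var(z_d)+(\vu^\sfT\EE_\vz[\vz])^2>0$, and then applies the plain Schur product theorem, whereas you establish only positive semidefiniteness plus a strictly positive diagonal ($\EE[z_d^2]>0$) and invoke — and correctly prove — the strengthened Schur product theorem. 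Your variant is marginally more general in that it does not rely on independence of the coordinates of $\vz$, but both arguments are sound and lead to the same conclusion.
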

Our new algorithm uses this improved update method. 
\begin{algorithm}[H]
	\caption{Projected-STG}\label{algo:proj-STG}
	{\bf Input}: $\mX\in\reals^{N\times D},\vy\in \reals^N,K,\lambda_N$, number of epochs $R$, Monte Carlo samples $M,L$, variance of the gates $\tau$, learning rate $\gamma$. Initialize $\mu_d=0.5$ for $d=1,\dots,D$\\
	{\bf Output}: Trained parameters ${\bm\theta},{\bm\mu}$ and estimated support-set
	
	\begin{algorithmic}[1]
		\STATE Compute $\mQ=\EE_\vz\qth{\vz(\bm\mu)\vz(\bm\mu)^\sfT},
		\vq=\EE_\vz[\vz(\bm\mu)]$ using $M$ samples.    
		\STATE Update $\bm\theta \coloneqq \qth{{\mX}^\sfT {\mX} \odot \mQ }^{-1} 
		\qth{({\mX}^\sfT \vy) \odot \vq}$
		\label{line:theta-choice}    
		\FOR{$\ell=1,\dots,L$}
		\FOR{$d=1,\dots,D$}
		\STATE Sample $\delta_d^{(\ell)}\sim \calN(0,\tau^2)$
		\STATE Compute $z_d^{(\ell)}=\max\pth{0,\min\pth{1,\mu_d+\delta_d^{(\ell)}}}$
		\ENDFOR
		\ENDFOR
		\STATE Compute $V=\frac 1{NL}\sum_{\ell=1}^{L}\|\vy-\mX(\bs\theta\odot\vz^{(\ell)})\|_2^2+\lambda_N\sum_{d=1}^{D}\Phi\pth{\frac{\mu_d}{\tau}}$
		\STATE Update $\bm\mu\coloneqq\bm\mu-\gamma\nabla_{\bm\mu}V$
		\STATE Repeat $R$ epochs
		\STATE $\hat{\bs\beta}={\bs\theta}\odot \vz(\bs\mu)$. Estimate support-set by coordinates of $\hat{\bs \beta}$ with $K$ highest entries.
	\end{algorithmic}
\end{algorithm}
\begin{remark}
	The matrix ${\mX}^\sfT {\mX} \odot \mQ$ in the above algorithm is invertible when $X$ has full column rank. Otherwise, we update $\bs\theta$ by choosing the optimal candidate from the set of minimizers of the quadratic function in \prettyref{thm:optim-theta}.
\end{remark}

\begin{proof}[Proof of \prettyref{thm:optim-theta}]
	Fix $\bs\mu,\mX$ and denote $\vz(\bm\mu)$ by $\vz$ for simplicity. Then the optimal ${\bm\theta}$ for \eqref{eq:risk} is given by minimizer of $\EE_{\vz}\|\mX\pth{{\bm\theta}\odot {\vz}}-\vy\|_2^2$ and hence equivalently the minimizer of 
	$\EE_\vz\qth{({\bm\theta}^{{\sfT}}\odot {\vz}^{{\sfT}})(\mX^{{\sfT}}\mX)({\bm\theta}\odot {\vz})}
	-2\pth{{\bm\theta}^\sfT\odot\EE_\vz[{\vz}^\sfT]}(\mX^\sfT \vy)$. We note the followings. 
	\begin{itemize}
		\item For any two matrices $\mA,\mB $ we have $\mathsf{Trace}[\mA \mB ]=\mathsf{Trace}[\mB \mA]$ whenever the dimensions agree. In addition if $\mC$ is a symmetric matrix then for any $i$ we get  $\qth{(\mA\odot \mC)\mB }_{ii}
		=\sum_j\mA_{ij}\mC_{ij}\mB _{ji}
		=\sum_j\mA_{ij}\mC_{ji}\mB _{ji}
		=\qth{\mA(\mC\odot \mB )}_{ii}.$ In view this using linearity of expectation we get
		\begin{align*}
			\EE_\vz\qth{({\bm\theta}^{{\sfT}}\odot {\vz}^{{\sfT}})(\mX^{{\sfT}}\mX)({\bm\theta}\odot {\vz})}
			&=\EE_\vz\qth{\mathsf{Trace}\qth{({\bm\theta}^{{\sfT}}\odot \vz^{\sfT})(\mX^{\sfT}\mX)({\bm\theta}\odot \vz)}}
			\nonumber\\
			&=\EE_\vz\qth{\mathsf{Trace}\qth{({\bm\theta}\odot \vz)({\bm\theta}^{{\sfT}}\odot \vz^{{\sfT}})(\mX^{{\sfT}}\mX)}}
			\nonumber\\
			&=\EE_\vz\qth{\mathsf{Trace}\qth{({\bm\theta}{\bm\theta}^{\sfT}\odot \vz\vz^\sfT)(\mX^{{\sfT}}\mX)}}
			\nonumber\\
			&=\mathsf{Trace}\qth{({\bm\theta}{\bm\theta}^{\sfT}\odot \EE_\vz\qth{\vz\vz^\sfT})(\mX^{{\sfT}}\mX)}
			\nonumber\\
			&=\mathsf{Trace}\qth{({\bm\theta}{\bm\theta}^{\sfT})\pth{\mX^{{\sfT}}\mX\odot \EE_\vz\qth{\vz\vz^\sfT}}}
			\nonumber\\
			&={\bm\theta}^{\sfT}\pth{\mX^{{\sfT}}\mX\odot \EE_\vz\qth{\vz\vz^\sfT}}{\bm\theta}.
		\end{align*}
		\item For any three vectors $\va,\vb,\vc$ of same length we have $(\va^\sfT\odot\vb^\sfT)\vc=\sum_ia_ib_ic_i=\va^\sfT(\vb\odot\vc)$, which means 
		$$
		\pth{{\bm\theta}^\sfT\odot\EE_\vz[{\vz}^\sfT]}(\mX^\sfT \vy)
		={\bm\theta}^\sfT\pth{(\mX^\sfT \vy)\odot\EE_\vz[{\vz}]}.
		$$
	\end{itemize}
	Combining the above we conclude that our problem reduces to minimizing the objective
	\begin{align}
		{\bm\theta}^{{\sfT}}\pth{\mX^{{\sfT}}\mX\odot \EE_\vz\qth{\vz\vz^\sfT}}{\bm\theta}
		-2{\bm\theta}^{{\sfT}}\pth{(\mX^{{\sfT}}\vy)\odot \EE_\vz[\vz]}
		\label{eq:quad}
	\end{align}
	as needed to be shown.
	Now assume that $\mX$ has full column rank. Then it follows that $\bm\mX^\sfT\bm\mX$ is invertible. Note that $\EE_\vz\qth{\vz\vz^\sfT}$ is also positive definite as given any $\vu\neq 0$ we have
	\begin{align*}
		\vu^\sfT\EE_\vz\qth{\vz\vz^\sfT}\vu
		=\sum_{d=1}^Du_d^2\Var(z_d)+(\vu^\sfT\EE_\vz[\vz])^2>0,
	\end{align*}
	as $Var(z_d)>0$ for $d=1,\dots,D$.
	Then using Schur's Theorem \cite[Theorem 3.1]{S73} we get that $\mX^{{\sfT}}\mX\odot \EE_\vz\qth{\vz\vz^\sfT}$ is positive definite. For any positive definite matrix $\mA$ and vector $\vb$ we have
	\begin{align*}
		{\bm\theta}^{{\sfT}}\mA{\bm\theta}
		-2{\bm\theta}^{{\sfT}}\vb
		=\|\mA^{\frac 12}\bs\theta-\mA^{-\frac 12}\vb\|_2^2
		-\|\mA^{-\frac 12}\vb\|_2^2,
	\end{align*}
	which implies ${\bm\theta}^{{\sfT}}\mA{\bm\theta}
	-2{\bm\theta}^{{\sfT}}\vb$ has unique minimizer $\hat{\bs\theta}=\mA^{-1}\vb$. Then it follows that \eqref{eq:quad} has the unique minimizer given by
	$
	\hat {\bs\theta}=(\mX^{{\sfT}}\mX\odot \EE_\vz\qth{\vz\vz^\sfT})^{-1}\pth{(\mX^{{\sfT}}\vy)\odot \EE_\vz[\vz]}.
	$
\end{proof}

	
	
	
	

\section{Theoretical guarantees of the STG}

The results in this paper also apply to high-dimensional settings where $D=D(N), K=K(N)$ are allowed to grow with $N$. Let $\hat {\bs\theta},\hat {\bs\mu}$ be a minimizer of \eqref{eq:risk}. We provide guarantees for our final estimator $\hat{\bm\beta}=\hat{\bm\theta}\odot\vz(\hat{\bm\mu})$. Our analysis follows closely along the lines of analysis for the SCAD penalty presented in \cite{huang2007asymptotic}. 

For any symmetric matrix $\mA$, define its spectral norm
$$
\spnorm{\mA}\eqdef\max\{|\rho|:\rho \text{ is an eigenvalue of } \mA \}.
$$
Given any positive-definite matrix $\mA$, let $\rho_{\min}>0$ denote the smallest eigenvalues of $\mA$. The following result is the central idea for deducting convergence guarantees of the STG.
\label{sec:theory}
\begin{theorem}[Convergence to the ground truth]\label{thm:prob-convergence}
	Suppose that $D/N\to 0,K\lambda_n\to 0$ and the design matrix $\mX$ satisfies $$\spnorm{\frac 1N \mX^{\sfT}\mX-\EE\qth{\frac 1N \mX^{\sfT}\mX}} \xrightarrow[]{P} 0,\quad \rho=\lim_{N\to\infty}\rho_{\min}(\EE\qth{\frac 1N \mX^{\sfT}\mX})>0.$$ Then $\hat{\bm \beta}$ converges to $\bm\beta^*$ in probability.
\end{theorem}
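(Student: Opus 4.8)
The plan is to follow the standard route for consistency of penalized least-squares estimators (as in \cite{huang2007asymptotic}), adding one layer to handle the overparametrization $\bm\beta=\bm\theta\odot\vz$. \emph{First}, I reduce the risk to a transparent form. Because the gate coordinates $z_d(\mu_d)$ are independent, $\EE_\vz[\vz(\bm\mu)\vz(\bm\mu)^\sfT]=\vq(\bm\mu)\vq(\bm\mu)^\sfT+\diag(v_1(\mu_1),\dots,v_D(\mu_D))$ with $\vq(\bm\mu)=\EE_\vz[\vz(\bm\mu)]$ and $v_d(\mu)=\Var(z_d(\mu))$; feeding this into the Hadamard--trace identity used in the proof of \prettyref{thm:optim-theta} yields
\begin{equation*}
\mathsf{Risk}(\bm\theta,\bm\mu)=\tfrac1N\|\vy-\mX(\bm\theta\odot\vq(\bm\mu))\|_2^2+\tfrac1N\sum_{d=1}^D\theta_d^2\,v_d(\mu_d)\,(\mX^\sfT\mX)_{dd}+\lambda_N\sum_{d=1}^D\Phi(\mu_d/\tau),
\end{equation*}
a sum of three nonnegative pieces. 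I write $\hat{\tilde{\bm\beta}}=\hat{\bm\theta}\odot\vq(\hat{\bm\mu})$ for the ``deterministic part'' of $\hat{\bm\beta}=\hat{\bm\theta}\odot\vz(\hat{\bm\mu})$. Throughout I use that, by Weyl's inequality and the hypotheses, $\hat\rho_N:=\rho_{\min}(\tfrac1N\mX^\sfT\mX)\toprob\rho>0$ and $\spnorm{\tfrac1N\mX^\sfT\mX}=O_P(1)$ (the latter under the standard normalization that the columns of $\mX$ have squared norm $O(N)$, equivalently $\rho_{\max}(\EE[\tfrac1N\mX^\sfT\mX])=O(1)$), so that with probability tending to one $\mX$ has full column rank and $(\mX^\sfT\mX)_{dd}\ge N\hat\rho_N$ for all $d$.

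\emph{Second}, I upper bound the attained risk by evaluating it at a well-chosen feasible point. Let $S=\supp(\bm\beta^*)$ and pick $c_N\to\infty$ slowly, e.g.\ $c_N=1+\tau\sqrt{4\log(K+N)}$; set $\mu_d^\circ=c_N$, $\theta_d^\circ=\beta_d^*/q_d(c_N)$ for $d\in S$, and $\mu_d^\circ=-c_N$, $\theta_d^\circ=\epsilon_0$ (a small fixed constant) for $d\notin S$. Using the elementary tail bounds $q_d(-c_N)\le\Phi(-c_N/\tau)$ and $v_d(\pm c_N)\le\Phi((1-c_N)/\tau)$, together with $|S|=K$, $D/N\to0$ and $K\lambda_N\to0$, a term-by-term check shows that the variance piece is $o_P(1)$, the off-$S$ part of the penalty piece is $o(1)$, the on-$S$ part of the penalty piece is at most $\lambda_N K=o(1)$, and the bias piece equals $\tfrac1N\|\bm\epsilon\|_2^2+o_P(1)$ (the surrogate $\bm\theta^\circ\odot\vq(\bm\mu^\circ)$ agrees with $\bm\beta^*$ on $S$ and is vanishingly small off $S$). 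Hence $\mathsf{Risk}(\hat{\bm\theta},\hat{\bm\mu})\le\tfrac1N\|\bm\epsilon\|_2^2+o_P(1)$.

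\emph{Third}, I turn this into convergence, first of $\hat{\tilde{\bm\beta}}$ and then of $\hat{\bm\beta}$. Dropping the two nonnegative tail pieces in the decomposition and using $\vy-\mX\bm\beta^*=\bm\epsilon$, the vector $\hat{\bm{u}}:=\hat{\tilde{\bm\beta}}-\bm\beta^*$ satisfies $\tfrac1N\|\mX\hat{\bm{u}}\|_2^2-\tfrac2N\bm\epsilon^\sfT\mX\hat{\bm{u}}\le o_P(1)$, hence $\hat\rho_N\|\hat{\bm{u}}\|_2^2\le(\tfrac2N\|\mX^\sfT\bm\epsilon\|_2)\|\hat{\bm{u}}\|_2+o_P(1)$. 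Since $\EE[\|\mX^\sfT\bm\epsilon\|_2^2\mid\mX]=\sigma^2\tr(\mX^\sfT\mX)$ and $\tr(\mX^\sfT\mX)/N^2\le(D/N)\max_d(\tfrac1N\mX^\sfT\mX)_{dd}\toprob0$, the coefficient $\tfrac2N\|\mX^\sfT\bm\epsilon\|_2=o_P(1)$, so solving the quadratic inequality in $\|\hat{\bm{u}}\|_2$ gives $\|\hat{\tilde{\bm\beta}}-\bm\beta^*\|_2\toprob0$. Plugging this back in shows the bias piece at $(\hat{\bm\theta},\hat{\bm\mu})$ equals $\tfrac1N\|\bm\epsilon\|_2^2+o_P(1)$, whence the leftover variance piece satisfies $\tfrac1N\sum_d\hat\theta_d^2\,v_d(\hat\mu_d)\,(\mX^\sfT\mX)_{dd}=o_P(1)$; combined with $(\mX^\sfT\mX)_{dd}\ge N\hat\rho_N$ this forces $\sum_d\hat\theta_d^2\,v_d(\hat\mu_d)=o_P(1)$. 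Finally, conditionally on the data $\EE_{\bm\delta}\|\hat{\bm\beta}-\hat{\tilde{\bm\beta}}\|_2^2=\sum_d\hat\theta_d^2\,v_d(\hat\mu_d)$, so a conditional Markov argument gives $\|\hat{\bm\beta}-\hat{\tilde{\bm\beta}}\|_2\toprob0$, and the triangle inequality yields $\hat{\bm\beta}\toprob\bm\beta^*$.

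The main obstacle I anticipate is the second step: $c_N\to\infty$ must be tuned so that the gate variances $v_d(\pm c_N)$ and the null-coordinate penalties $\Phi(-c_N/\tau)$ decay faster than $K$ and $D$ grow, while the on-support rescaling $\beta_d^*/q_d(c_N)$ and the bias term stay under control; the genuinely STG-specific ingredient here is clean control of $q_d(\mu)$ and $v_d(\mu)=\Var(z_d(\mu))$ in terms of $\Phi$. A secondary subtlety is structural: since the estimator is parametrized through $(\bm\theta,\bm\mu)$ rather than $\bm\beta$, consistency is first obtained for the deterministic surrogate $\hat{\tilde{\bm\beta}}=\hat{\bm\theta}\odot\vq(\hat{\bm\mu})$ and only then transferred to $\hat{\bm\beta}=\hat{\bm\theta}\odot\vz(\hat{\bm\mu})$, which is precisely what the variance piece in the risk decomposition enables.
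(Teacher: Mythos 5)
Your argument is sound and reaches the same conclusion, but by a genuinely different route than the paper, so a comparison is worthwhile. The paper's proof is a one-shot basic-inequality argument: it compares $(\hat{\bm\theta},\hat{\bm\mu})$ directly against $(\bm\beta^*,\bm\mu^*)$ with $\mu^*_d=\pm N$ for an arbitrarily large $N$ (so that $\vz(\bm\mu^*)=(1_K,0_{D-K})$ off an event of vanishing probability), keeps the whole gate expectation $\EE_{\vz}\Vert(\mX^\sfT\mX)^{1/2}(\hat{\bm\beta}-\bm\beta^*)\Vert_2^2$ intact, and completes the square against $(\mX^\sfT\mX)^{-1/2}\mX^\sfT\bm\epsilon$; the cross term then becomes the projection quadratic form $\bm\epsilon^\sfT\mX(\mX^\sfT\mX)^{-1}\mX^\sfT\bm\epsilon$, whose conditional expectation is exactly $\sigma^2 D$, giving the $D/N$ rate with no upper bound on the spectrum of $\mX^\sfT\mX$. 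You instead exploit independence of the gates to split the risk into bias, gate-variance, and penalty pieces, prove consistency first for the surrogate $\hat{\bm\theta}\odot\EE_\vz[\vz(\hat{\bm\mu})]$, and then transfer to $\hat{\bm\beta}$ by showing the variance piece vanishes; this is cleaner conceptually (it makes rigorous the step the paper handles somewhat informally, namely that the comparison risk at $\bm\mu^*$ really equals $\tfrac1N\Vert\bm\epsilon\Vert_2^2$ up to $o_P(1)$, and it isolates exactly how the residual gate randomness in $\hat{\bm\beta}$ is controlled). The price is the one place where your proof is strictly weaker than the theorem as stated: bounding the cross term by $\tfrac2N\Vert\mX^\sfT\bm\epsilon\Vert_2\Vert\hat{\bm u}\Vert_2$ and controlling $\tr(\mX^\sfT\mX)$, as well as the terms $(\mX^\sfT\mX)_{dd}/N$ and $\tfrac1N\Vert\mX\bm r\Vert_2^2$ at your feasible point, all require $\spnorm{\tfrac1N\mX^\sfT\mX}=O_P(1)$, i.e.\ a bounded largest eigenvalue of $\EE[\tfrac1N\mX^\sfT\mX]$, which is not among the hypotheses (you flag this yourself). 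It is mild and can be removed either by adopting the paper's completion-of-the-square with the projection matrix in your third step, or by letting the comparison gates saturate at a realization-dependent rate (effectively $c_N\to\infty$ fast enough relative to $\Vert\mX\Vert$ and $\Vert\bm\beta^*\Vert$), which is what the paper's choice $\mu^*_d=\pm N$ with $N$ ``arbitrarily large'' accomplishes.
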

\begin{remark}
	The condition regarding the convergence of $\frac 1N \mX^T\mX$ is satisfied in many general cases. For example, consider the cases when the entries of the design matrix are fixed or are independently generated with bounded second moments. Condition on the regularize similar to $\lim_{N\to\infty }K\lambda_N=0$ is standard in the literature; see \cite{huang2007asymptotic} for instance. For our simulation studies we choose $\lambda_N=C\sqrt{\log(D-K)\log(K)\over N}$ for some suitable constant $C$ chosen via cross-validation, which implies that the theoretical results will hold whenever $\lim_{N\to\infty}K^{1.5}\sqrt{\log(D-K)\over N}\to 0$. Our simulations show that the STG estimators perform well even when the dimension $D$ is much bigger than the sample size. However, the high-dimensional case analysis is beyond this paper's scope.
\end{remark}
\begin{proof}
	Note that for any symmetric matrix $\mA$ we have $\spnorm{\mA}=\max_{\|\vy\|_2=1}\vy^\sfT \mA\vy$ and for any non-negative definite matrix $\mA$ we have $\rho_{\min}(\mA)=\min_{\|\vy\|_2=1}\vy^\sfT \mA\vy$. These properties imply that given any two non-negative definite matrices $\mA,\mB$ we have
	\begin{align} 
		\label{eq:norm_prop}
		\rho_{\min}(\mA) &= \min_{\|\vy\|_2=1}\vy^\sfT \mA\vy
		\nonumber\\
		&=\min_{\|y\|_2=1}\pth{\vy^\sfT \mB\vy + \vy^\sfT (\mA-\mB)\vy}
		\nonumber\\
		&\leq \min_{\|y\|_2=1}\pth{\vy^\sfT \mB\vy + \spnorm{\mA-\mB}}
		= \rho_{\min}(B)+\spnorm{A-B}.
	\end{align}
	Define $\sS_{N}=\{\mX:\spnorm{\frac 1N \mX^{\sfT}\mX-\EE\qth{\frac 1N \mX^{\sfT}\mX}}\leq \rho/2\}$. Hence for any $\mX\in\sS_N$ using \eqref{eq:norm_prop} with $\mA=\EE\qth{\frac 1N\mX^\sfT\mX},\mB=\frac 1N\mX^\sfT\mX$ we have $$\rho_{\min}(\frac 1N\mX^{\sfT}\mX)\geq \rho_{\min}\pth{\EE\qth{\frac 1N \mX^{\sfT}\mX}}-\spnorm{\frac 1N \mX^{\sfT}\mX-\EE\qth{\frac 1N \mX^{\sfT}\mX}}\geq \rho/2.$$ 
	Let us assume for simplicity that the first $K$ coordinates of $\bm\beta^*$ contain its support and $\sigma=\tau=1$. Note that any other such choice can be analyzed similarly. 
	Choose $\bs \mu^*=(N1_K,-N1_{{D}-K})$ where $1_{a}$ is an $``a"$ length vector of all 1's and $N$ is some arbitrarily large number to be chosen later. 
	From Mill's ratio bound \cite{G41} on $\Phi(-N)=1-\Phi(N)$ : $\Phi(-N)
	\leq \frac 1{\sqrt {2\pi}N} e^{-{N^2\over 2}}$
	$$\PP\qth{\vz(\bs \mu^*)\neq (1_{K},0_{D-K})}
	\leq D\Phi(-N)
	\leq \frac D{\sqrt {2\pi}N} e^{-{N^2\over 2}}.$$
	Letting $N\to\infty$, the last display implies 
	\begin{align}
		&\PP\qth{\sth{\mX\notin \sS_N}\cup\sth{\vz(\bs \mu^*)\neq (1_{K},0_{D-K})}}
		\leq 
		\PP\qth{\sS_N}+\PP\qth{\vz(\bs \mu^*)\neq (1_{K},0_{D-K})}
		\to 0.
		\label{eq:m0}
	\end{align}
	We first restrict the design matrix $\mX$ on the set $\sS_N$ and fix $\vz(\bs\mu^*)=(1_{K},0_{D-K})$. Note that for every $\mX\in\sS_N$ the matrix ${\mX^{{\sfT}}\mX}$ is invertible as its smallest eigenvalue is positive. Hence using optimality of $\hat{\bs\theta},\hat{\bs\mu}$ and the fact $\bm\beta^*\odot {\vz}(\bs  \mu^*)=\bm\beta^*$ we get
	for each fixed realization of $\vy,\bs\epsilon$
	\begin{align*}
		0&\geq N\sth{\Risk(\bs{\hat \theta},\hat{\bs \mu};\lambda_N,1)-\Risk(\bm\beta^*,\bs \mu^*;\lambda_N,1)}
		\nonumber\\
		&\geq {\EE_{\vz}\qth{\|\vy-\mX{\hat{\bm \beta}}\|_2^2}
			-{\|\vy-\mX\bm\beta^*\|_2^2}}
		+ N\lambda_N\pth{\sum_{d=1}^D\sth{\Phi\pth{\hat {\bs\mu}  }-\Phi\pth{\bs \mu^*}}}
		\nonumber\\
		&\geq \EE_{\vz}\qth{\|\mX({\hat{\bm \beta}}-\bm\beta^*)\|_2^2 -2\bs\epsilon^{{\sfT}}\mX\pth{\hat{\bm \beta}-\bm\beta^*}}
		-N\lambda_N\pth{K+(D-K)\Phi\pth{-N}}
		\nonumber\\
		&\geq \EE_{\vz}\qth{\|(\mX^{{\sfT}}\mX)^{\frac 12}({\hat{\bm \beta}}-\bm\beta^*)\|_2^2 -2\bs\epsilon^{{\sfT}}\mX\pth{{\hat{\bm \beta}}-\bm\beta^*}}
		-NK\lambda_N-{\lambda_NDe^{-{N^2\over 2}}\over \sqrt{2\pi}}
		\nonumber\\
		&\geq \EE_{\vz}\qth{\|(\mX^{{\sfT}}\mX)^{\frac 12}({\hat{\bm \beta}}-\bm\beta^*)-(\mX^{{\sfT}}\mX)^{-\frac 12}\mX^\sfT\bs\epsilon\|_2^2} 
		-\bs\epsilon^{{\sfT}}\mX(\mX^{{\sfT}}\mX)^{-1}\mX^{{\sfT}}\bs\epsilon
		-NK\lambda_N
		-{\lambda_NDe^{-{N^2\over 2}}\over \sqrt{2\pi}},
	\end{align*}
	and hence using $\|\va+\vb\|_2^2\leq 2(\|\va\|_2^2+\|\vb\|_2^2)$ we get
	\begin{align}
		&\EE_{\vz}\|(\mX^{{\sfT}}\mX)^{\frac 12}(\bm\beta^*-{\hat{\bm\beta}})\|_2^2
		\leq 4\bs\epsilon^{{\sfT}}\mX(\mX^{{\sfT}}\mX)^{-1}\mX^{{\sfT}}\bs\epsilon+NK\lambda_N
		+{\lambda_NDe^{-{N^2\over 2}}\over \sqrt{2\pi}}.
		\label{eq:m1}
	\end{align}
	Using the fact for any symmetric matrix $\mA$ and vector $y\neq 0$
	\begin{align*}
		\|\mA\vy\|^2\geq \rho_{\min}(\mA^2) \|y\|_2^2
	\end{align*}
	we get
	\begin{align}
		\|(\mX^{{\sfT}}\mX)^{\frac 12}(\bm\beta^*-{\hat{\bm \beta}})\|_2^2 
		\geq N\rho_{\min}(\frac 1N \mX^T\mX)\|\bm \beta^*-\hat{\bm \beta}\|_2^2
		\geq N\frac {\rho}2\|\bm \beta^*-\hat{\bm \beta}\|_2^2
		\label{eq:m2}
	\end{align}
	Using linearity of expectation, $\EE_{\bs\epsilon}[\bs\epsilon\bs\epsilon^\sfT]=\mI_n$, and commutativity of $\mathsf{Trace}$ operator we get for each $\mX\in\sS_N$
	\begin{align*}
		\EE_{\bs\epsilon}\qth{\bs\epsilon^{\sfT}\mX(\mX^{\sfT}\mX)^{-1}\mX^{\sfT}\bs\epsilon} 
		&=\EE_{\bs\epsilon}\qth{\mathsf{Trace}\pth{\bs\epsilon^{\sfT}\mX(\mX^{\sfT}\mX)^{-1}\mX^{\sfT}\bs\epsilon}}
		\nonumber\\
		&=\EE_{\bs\epsilon}\qth{\mathsf{Trace}\pth{\mX(\mX^{\sfT}\mX)^{-1}\mX^{\sfT}\bs\epsilon\bs\epsilon^{\sfT}}}
		\nonumber\\
		&=\mathsf{Trace}\pth{\mX(\mX^{\sfT}\mX)^{-1}\mX^{\sfT}\EE\qth{\bs\epsilon\bs\epsilon^{\sfT}}}
		=\mathsf{Trace}\pth{\mX(\mX^{\sfT}\mX)^{-1}\mX^{\sfT}} 
		=D.
	\end{align*}
	In view of \eqref{eq:m1} and \eqref{eq:m2} this implies
	\begin{align*}
		&\EE\qth{\left.\|{\bm\beta^*}-\hat{\bm\beta}\|_2^2\right|\sth{\mX\in \sS_N}\cap\sth{\vz(\bs \mu^*)= (1_{K},0_{D-K})}}
		\nonumber\\
		&\leq 
		\frac 2{N\rho}\pth{\EE\qth{4\bs{\epsilon}^\sfT \mX(\mX^\sfT \mX)^{-1}\mX^\sfT\bs{\epsilon}|\mX\in\sS_N}
			+NK\lambda_N+\frac {\lambda_N De^{-\frac{N^2}2}}{\sqrt{2\pi}}}
		\nonumber\\
		& \leq \frac {2}\rho\pth{\frac{4D}N+K\lambda_N+{\lambda_NDe^{-{N^2\over 2}}\over \sqrt{2\pi}N}}
		\to 0.
	\end{align*}
	In view of the Markov inequality, this will imply for every $c>0$
	\begin{align*}
		&\PP\qth{\|{\bm\beta^*}-\hat{\bm\beta}\|_2^2>c}
		\nonumber\\
		&\leq  \PP\qth{\left.\|{\bm\beta^*}-\hat{\bm\beta}\|_2^2>c\right|\sth{\mX\in \sS_N}\cap\sth{\vz(\bs \mu^*)= (1_{K},0_{D-K})}}
		+\PP[\sth{\mX\notin \sS_N}\cup\sth{\vz(\bs \mu^*)\neq (1_{K},0_{D-K})}]
		\nonumber\\
		&\leq \frac{\EE\qth{\left.\|{\bm\beta^*}-\hat{\bm\beta}\|_2^2\right|\sth{\mX\in \sS_N}\cap\sth{\vz(\bs \mu^*)\neq (1_{K},0_{D-K})}}}c
		+\PP[\sth{\mX\notin \sS_N}\cup\sth{\vz(\bs \mu^*)\neq (1_{K},0_{D-K})}]
		\nonumber\\
		&\to 0,
	\end{align*}
	as $N$ tends to infinity. 
\end{proof}

\begin{theorem}[Support recovery]\label{thm:support-recovery}
	Suppose that $\bs\beta^*$ has $K$ non-zero entries and let 
	$\max_{i:\beta^*_i\neq 0}\abs{\beta_i^*}>\eta>0$ for some absolute constant $\eta$. Then under the conditions in \prettyref{thm:prob-convergence}, the indices of $\hat{\bm \beta}$ with $K$-largest magnitudes (tie broken arbitrarily) recover the exact support of $\bm\beta^*$ with probability tending to 1.
\end{theorem}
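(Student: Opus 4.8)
The plan is to reduce support recovery to the $\ell_2$-consistency already established in \prettyref{thm:prob-convergence}. Write $S=\supp(\bm\beta^*)$, so that $|S|=K$, and let $S^c$ denote its complement. The first step is the elementary bound $\|\hat{\bm\beta}-\bm\beta^*\|_\infty\le\|\hat{\bm\beta}-\bm\beta^*\|_2$, which holds for every dimension $D=D(N)$, so that \prettyref{thm:prob-convergence} immediately yields $\|\hat{\bm\beta}-\bm\beta^*\|_\infty\toprob 0$.

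The second step is a deterministic separation argument on the event $\calE_N=\sth{\|\hat{\bm\beta}-\bm\beta^*\|_\infty<\eta/2}$. On $\calE_N$, for every $i\in S$ the reverse triangle inequality gives $|\hat\beta_i|\ge|\beta_i^*|-\|\hat{\bm\beta}-\bm\beta^*\|_\infty>\eta-\eta/2=\eta/2$, where I read the signal-strength hypothesis as the usual beta-min condition, namely that \emph{every} nonzero coordinate of $\bm\beta^*$ has magnitude exceeding $\eta$. For every $j\in S^c$ we have $\beta_j^*=0$, hence $|\hat\beta_j|=|\hat\beta_j-\beta_j^*|\le\|\hat{\bm\beta}-\bm\beta^*\|_\infty<\eta/2$. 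Thus on $\calE_N$ each coordinate of $\hat{\bm\beta}$ indexed by $S$ strictly exceeds in magnitude each coordinate indexed by $S^c$, so the set of $K$ indices of $\hat{\bm\beta}$ with largest magnitude is exactly $S$, no matter how ties within $S^c$ are resolved.

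The final step merely combines the two: $\PP[\calE_N]\to 1$ by the first step, and on $\calE_N$ the top-$K$ magnitude index set equals $S$ by the second, so
$$\PP\big[\text{top-}K\text{ magnitude indices of }\hat{\bm\beta}=S\big]\ \ge\ \PP[\calE_N]\ \longrightarrow\ 1,$$
which is the claim.

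I do not anticipate a genuine analytic obstacle: once \prettyref{thm:prob-convergence} is granted, the statement is essentially a one-line consequence of $\ell_\infty$-consistency together with a constant-size gap between the nonzero and zero entries of $\bm\beta^*$, and the argument is dimension-free, so it carries over to the high-dimensional regime $D=D(N)\to\infty$. The only point that deserves care is that the ``$\eta$'' hypothesis must be used as a lower bound on \emph{all} nonzero entries of $\bm\beta^*$ (a beta-min/separation condition): a single vanishingly small nonzero coordinate would be indistinguishable from a zero one and would defeat exact recovery, so the theorem should be read with $\min_{i:\beta_i^*\neq 0}|\beta_i^*|>\eta$ in place of the written maximum.
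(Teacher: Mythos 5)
Your proof is correct and follows essentially the same route as the paper's: both reduce failure of top-$K$ recovery to the existence of a coordinate where $|\hat\beta_i-\beta_i^*|>\eta/2$, hence to $\|\hat{\bm\beta}-\bm\beta^*\|_2\ge\eta/2$, which \prettyref{thm:prob-convergence} rules out --- the paper argues by contraposition on a misranked pair of indices while you argue directly via $\ell_\infty$-consistency and a separation event, but the content is identical. Your observation that the hypothesis must be read as $\min_{i:\beta_i^*\neq 0}|\beta_i^*|>\eta$ is also correct: the paper's own proof silently uses exactly this (its parenthetical ``and hence $|\beta^*_{i'}|>\eta$'' for an arbitrary nonzero index $i'$ only follows from the min condition), so the $\max$ in the theorem statement is evidently a typo.
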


\begin{proof}
	Suppose that the locations of $K$-largest $\hat\beta_i$'s (magnitude wise) do not match the support of $\beta^*$. Then there exist two indices $i,i'$ in $\{1,\dots,D\}$ such that $|\hat\beta_i|\geq |\hat\beta_{i'}|$ (incorporating scenario of ties) and $\beta^*_i=0,\beta^*_{i'}\neq 0$ (and hence $|\beta^*_{i'}|>\eta$). Then it follows that
	\begin{itemize}
		\item if $|\hat\beta_i|\geq {\eta}/2$ we have $|\hat\beta_i-\beta^*_i|>{\eta}/2$
		\item if $|\hat\beta_i| < {\eta}/2$ then ${\eta}/2>|\hat\beta_{i'}|$ and so 
		$|\hat\beta_{i'}-\beta^*_{i'}|>{\eta}/2$.
	\end{itemize}
	Combining these we get that unsuccessful recovery implies ${\eta}/2\leq \|\hat{\bm\beta}-\bm\beta^*\|_2$, which occurs with negligible probability by \prettyref{thm:prob-convergence}.
\end{proof}

The following lemma is a direct consequence of the above theorems.
\begin{corollary}
	Suppose that the design matrix $\mX$ is fixed with $\mX^\sfT\mX$ invertible. Then the STG-estimator recovers the support of $\bm\beta^*$ whenever $\frac DN\to 0$ and $K\lambda_N\to 0$. 
\end{corollary}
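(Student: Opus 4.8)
The plan is to recognize that this statement is nothing more than a specialization of \prettyref{thm:support-recovery} (and, through it, of \prettyref{thm:prob-convergence}) to the deterministic-design case, so that the only real work is bookkeeping: checking that the two design-matrix hypotheses of \prettyref{thm:prob-convergence} hold automatically once $\mX$ is fixed, and then quoting \prettyref{thm:support-recovery}. I would not reprove anything.

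First I would note that when $\mX$ is deterministic the expectation $\EE\qth{\frac1N\mX^\sfT\mX}$ is just $\frac1N\mX^\sfT\mX$ itself, so
\[
\spnorm{\frac1N\mX^\sfT\mX-\EE\qth{\frac1N\mX^\sfT\mX}}=0
\]
identically, and the first condition of \prettyref{thm:prob-convergence} holds trivially (it converges to $0$ deterministically, hence in probability). For the second condition I would use that $\mX^\sfT\mX$, being an invertible Gram matrix, is positive definite, so $\rho_{\min}\pth{\frac1N\mX^\sfT\mX}>0$ for each $N$; reading ``fixed design'' as a sequence of deterministic matrices whose normalized Gram matrices stay uniformly nonsingular then yields $\rho=\lim_{N\to\infty}\rho_{\min}\pth{\EE\qth{\frac1N\mX^\sfT\mX}}>0$. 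Combined with the two hypotheses $D/N\to0$ and $K\lambda_N\to0$ stated in the corollary, this places us exactly in the setting of \prettyref{thm:prob-convergence}, so $\hat{\bm\beta}\toprob\bm\beta^*$.

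Finally I would invoke \prettyref{thm:support-recovery}: for a fixed sparse target $\bm\beta^*$ with $K$ nonzero coordinates the quantity $\max_{i:\beta_i^*\neq0}\abs{\beta_i^*}$ is a strictly positive constant, so one may take $\eta$ equal to, say, half of it and the hypothesis of that theorem is met; its conclusion is precisely the claimed exact support recovery with probability tending to $1$. The only point I would flag as not entirely mechanical — the main (and essentially the sole) obstacle — is the passage from ``$\mX^\sfT\mX$ invertible'' to a \emph{uniform} lower bound $\rho>0$ on $\rho_{\min}\pth{\frac1N\mX^\sfT\mX}$ along the sequence $N\to\infty$: invertibility at each finite $N$ is not by itself enough to drive the asymptotics in \prettyref{thm:prob-convergence}, so this nondegeneracy of the normalized Gram matrices should be understood as part of the statement (it is the natural reading in the classical fixed-dimensional regime the corollary has in mind, and likewise the min-magnitude condition on $\bm\beta^*$ must be imposed if one lets $\bm\beta^*$ vary with $N$).
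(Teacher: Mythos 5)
Your proposal matches the paper's own treatment: the paper gives no separate proof, merely declaring the corollary ``a direct consequence of the above theorems,'' and your reduction --- observing that for deterministic $\mX$ the spectral-norm condition of \prettyref{thm:prob-convergence} holds trivially and then invoking \prettyref{thm:support-recovery} --- is exactly that argument spelled out. Your caveat that invertibility of $\mX^\sfT\mX$ at each $N$ does not by itself yield $\lim_{N\to\infty}\rho_{\min}\pth{\frac1N\mX^\sfT\mX}>0$ is a legitimate point the paper glosses over, and reading the uniform nondegeneracy into the hypothesis, as you do, is the right fix.
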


The results for random design-matrix structures follow from the above theorem and the results on the convergence of sample covariance matrices. In this setup, we present the results for two specific cases for the design matrix: (1) design matrices with sub-Gaussian entries, and (2) design matrices with the entries having moment constraints. Let us call a random vector $\vx$ to be Sub-Gaussian if for any vector $\vy$ in the $D$ dimensional unit sphere $\calS^{D-1}$ the random variable $\vx^\sfT y$ has Sub-Gaussian tail
\begin{align}
	\PP[|\vx^\sfT \vy|>t]\leq 2e^{-t^2\over L^2},\quad \forall \vy\in \calS^{D-1}
\end{align}
for some finite constant $L$. Similarly, let us define a random vector $\vx$ to have $q$-th moment bounded if for any vector $\vy$ in the $D$ dimensional unit sphere $\calS^{D-1}$ the random variable $\vx^\sfT y$ has bounded $q$-th moment
\begin{align}
	\|\vx\|_2\leq \alpha\sqrt D \text{ a.s.},\quad \EE[|\vx^\sfT \vy|^q]\leq L^q,\quad \forall \vy\in \calS^{D-1}
\end{align}
with $\alpha,L$ being finite constants. We have the following results.
\begin{corollary}\label{cor:SubG}
	Suppose that the columns of the design matrix $\mX$ are sub-gaussian with covariance matrix $\bm\Sigma$ and mean vector $\bm \mu$, with $\rho_{\min}(\bm\Sigma+\bm \mu\bm\mu^{\sfT})>0$. Then the STG-estimator recovers the support of $\bm\beta^*$ if $\frac DN\to 0$ and $K\lambda_N\to 0$.. 
\end{corollary}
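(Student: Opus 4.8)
The plan is to verify that, for this random design, the four hypotheses of \prettyref{thm:prob-convergence} all hold, and then to quote \prettyref{thm:support-recovery}. Two of them, $D/N\to0$ and $K\lambda_N\to0$, are assumed outright, so only the two conditions involving $\tfrac1N\mX^\sfT\mX$ need attention. Writing the $N$ input vectors as $\vx_1,\dots,\vx_N\in\reals^D$ (so that $\mX=[\vx_1,\dots,\vx_N]^\sfT$ and the $\vx_i$ are i.i.d.\ sub-Gaussian with mean $\bm\mu$ and covariance $\bm\Sigma$), one has $\mX^\sfT\mX=\sum_{i=1}^N\vx_i\vx_i^\sfT$, hence $\EE\qth{\tfrac1N\mX^\sfT\mX}=\EE[\vx\vx^\sfT]=\bm\Sigma+\bm\mu\bm\mu^\sfT$. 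Therefore $\rho=\lim_{N}\rho_{\min}\pth{\EE\qth{\tfrac1N\mX^\sfT\mX}}=\rho_{\min}(\bm\Sigma+\bm\mu\bm\mu^\sfT)>0$ by hypothesis, which disposes of the eigenvalue condition.

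It remains to show $\spnorm{\tfrac1N\mX^\sfT\mX-(\bm\Sigma+\bm\mu\bm\mu^\sfT)}\toprob0$. First note that sub-Gaussianity already controls the target: for every $\vy\in\calS^{D-1}$, $\vy^\sfT(\bm\Sigma+\bm\mu\bm\mu^\sfT)\vy=\EE[(\vx^\sfT\vy)^2]\le C L^2$ (a sub-Gaussian variable with parameter $L$ has second moment $O(L^2)$), so $\spnorm{\bm\Sigma+\bm\mu\bm\mu^\sfT}\le C L^2$. Next, since
\[
\spnorm{\tfrac1N\mX^\sfT\mX-(\bm\Sigma+\bm\mu\bm\mu^\sfT)}
=\sup_{\vy\in\calS^{D-1}}\abs{\tfrac1N\sum_{i=1}^N(\vx_i^\sfT\vy)^2-\EE[(\vx^\sfT\vy)^2]},
\]
and each $(\vx_i^\sfT\vy)^2$ is sub-exponential with norm $\le C L^2$ uniformly in $\vy$, I would cover $\calS^{D-1}$ by a $\tfrac14$-net of cardinality $\le 9^D$, apply Bernstein's inequality together with a union bound over the net, and pass from the net back to the sphere in the usual way, obtaining $\PP\qth{\spnorm{\tfrac1N\mX^\sfT\mX-(\bm\Sigma+\bm\mu\bm\mu^\sfT)}>t}\le 9^D\exp\pth{-cN\min\sth{t^2/L^4,\;t/L^2}}$ for an absolute $c>0$; for any fixed $t>0$ this tends to $0$ precisely because $D/N\to0$. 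Equivalently, one may simply invoke the standard estimate $\EE\,\spnorm{\tfrac1N\sum_i\vx_i\vx_i^\sfT-\EE[\vx\vx^\sfT]}\le C_L\pth{\sqrt{D/N}+D/N}$ for sample second-moment matrices of sub-Gaussian vectors and then apply Markov's inequality.

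With all four hypotheses of \prettyref{thm:prob-convergence} now in force, $\hat{\bm\beta}\toprob\bm\beta^*$, and feeding this into \prettyref{thm:support-recovery} (its only extra ingredient being the signal-strength condition $\max_{i:\beta_i^*\neq0}\abs{\beta_i^*}>\eta$) yields exact recovery of $\supp(\bm\beta^*)$ with probability tending to $1$. I expect the concentration step to be the only genuine obstacle: one must make sure that $L$ — and hence $\spnorm{\bm\Sigma+\bm\mu\bm\mu^\sfT}$ — stays bounded as $D=D(N)$ grows, so that the $e^{cD}$ cost of the net is overcome by the $e^{-cNt^2/L^4}$ tail exactly under the assumption $D/N\to0$; this is what the ``finite constant $L$'' clause in the sub-Gaussian definition provides. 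A minor point worth recording is that a nonzero mean $\bm\mu$ causes no difficulty, since the concentration is applied directly to the uncentered moment matrix $\EE[\vx\vx^\sfT]$, which is exactly $\bm\Sigma+\bm\mu\bm\mu^\sfT$.
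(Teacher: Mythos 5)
Your argument is correct and follows the same route as the paper: verify the two nontrivial hypotheses of \prettyref{thm:prob-convergence} (the eigenvalue condition via $\EE[\tfrac1N\mX^\sfT\mX]=\bm\Sigma+\bm\mu\bm\mu^\sfT$, and the spectral-norm concentration of the uncentered second-moment matrix), then invoke \prettyref{thm:support-recovery}. The only difference is that the paper disposes of the concentration step by citing Proposition~2.1 of Vershynin's sample-covariance paper, whereas you sketch the standard $\tfrac14$-net plus Bernstein proof of that very proposition; your added caveat that $L$ must stay bounded as $D=D(N)$ grows is a fair observation that the paper leaves implicit.
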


\begin{corollary}\label{cor:moments}
	Suppose that the columns of the design matrix $\mX$ have finite $q$-th moment with $q>4$.  with covariance matrix $\bm\Sigma$ and mean vector $\bm \mu$, with $\rho_{\min}(\bm\Sigma+\bm \mu\bm\mu^{\sfT})>0$. Then the STG-estimator recovers the support of $\bm\beta^*$ if $(\log\log D)^2\pth{D\over N}^{\frac 12-\frac 2q}\to 0$ and $K\lambda_N\to 0$.. 
\end{corollary}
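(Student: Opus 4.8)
The plan is to derive the corollary from \prettyref{thm:support-recovery}: it suffices to check that, under the stated moment hypotheses and growth condition, all the assumptions of \prettyref{thm:prob-convergence} hold, just as in the sub-Gaussian case of \prettyref{cor:SubG}. Write $\vx_1,\dots,\vx_N$ for the \iid sample vectors, so that $\frac 1N\mX^\sfT\mX=\frac 1N\sum_{i=1}^N\vx_i\vx_i^\sfT$ and $\mM\eqdef\EE\qth{\frac 1N\mX^\sfT\mX}=\EE[\vx\vx^\sfT]=\bm\Sigma+\bm\mu\bm\mu^\sfT$. Three of the four required conditions are then immediate: $K\lambda_N\to0$ is assumed; since $q>4$ we have $\frac12-\frac2q>0$, so $(\log\log D)^2(D/N)^{1/2-2/q}\to0$ forces $(D/N)^{1/2-2/q}\to0$ and hence $D/N\to0$; and $\rho_{\min}(\mM)=\rho_{\min}(\bm\Sigma+\bm\mu\bm\mu^\sfT)$ is assumed bounded away from $0$, so $\rho=\lim_{N\to\infty}\rho_{\min}(\bm\Sigma+\bm\mu\bm\mu^\sfT)>0$.

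The remaining, and only substantive, condition is $\spnorm{\frac 1N\mX^\sfT\mX-\mM}\xrightarrow{P}0$. I would first reduce to the isotropic case by whitening: the vectors $\vu\eqdef\mM^{-1/2}\vx$ have $\EE[\vu\vu^\sfT]=\mI_D$, and because $\rho_{\min}(\mM)$ is bounded below and $\|\mM\|\le L^2$ (the latter since $\EE|\vx^\sfT\vy|^2\le L^2$ for every $\vy\in\calS^{D-1}$), they still obey $\|\vu\|_2=O(\sqrt D)$ almost surely and $\sup_{\vy\in\calS^{D-1}}\EE|\vu^\sfT\vy|^q=O(1)$. Invoking the standard concentration bound for empirical covariance operators of distributions whose one-dimensional marginals have a bounded $q$-th moment with $q>4$, and conjugating back by $\mM^{1/2}$, one obtains
\begin{align*}
	\EE\,\spnorm{\frac 1N\sum_{i=1}^N\vx_i\vx_i^\sfT-\mM}
	\;\le\; C_{q,L,\alpha}\pth{(\log\log D)^2\pth{\frac DN}^{\frac12-\frac2q}+\sqrt{\frac DN}},
\end{align*}
in which the first term dominates. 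Under the hypothesis $(\log\log D)^2(D/N)^{1/2-2/q}\to0$ the right-hand side vanishes, so Markov's inequality gives $\spnorm{\frac 1N\mX^\sfT\mX-\mM}\xrightarrow{P}0$. All hypotheses of \prettyref{thm:prob-convergence} now hold, and \prettyref{thm:support-recovery} yields exact support recovery with probability tending to $1$.

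The hard part is only the clean invocation of the heavy-tailed covariance-estimation inequality: one has to confirm that it applies to the possibly non-isotropic, non-centered $\vx_i$ --- which the whitening step handles, and which is harmless precisely because $\rho_{\min}(\bm\Sigma+\bm\mu\bm\mu^\sfT)$ is bounded away from zero --- and one has to verify that the auxiliary $\sqrt{D/N}$-type terms produced by that bound are dominated by the stated rate $(\log\log D)^2(D/N)^{1/2-2/q}$, which is exactly what the extra $(\log\log D)^2$ factor and the exponent $\frac12-\frac2q<\frac12$ ensure. Beyond \prettyref{thm:prob-convergence} and this quoted concentration bound, there is essentially no new probabilistic content.
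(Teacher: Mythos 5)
Your proposal is correct and follows essentially the same route as the paper: both reduce the corollary to verifying the hypotheses of \prettyref{thm:prob-convergence} and invoke the heavy-tailed ($q>4$) sample-covariance concentration bound of Vershynin (\cite[Theorem 1.2]{vershynin2012close}) to get $\spnorm{\frac 1N\mX^\sfT\mX-\EE[\frac 1N\mX^\sfT\mX]}\toprob 0$. The paper simply cites that theorem without spelling out the whitening/non-isotropic reduction or the domination of the $\sqrt{D/N}$ term, which you make explicit; otherwise the arguments coincide.
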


\begin{proof}[Proof of \prettyref{cor:SubG} and \prettyref{cor:moments}]
	The convergence in probability condition on $\frac 1N \mX^\sfT\mX$, as required by \prettyref{thm:prob-convergence}, is guaranteed by \cite[Proposition 2.1]{vershynin2012close} in the Sub-Gaussian case and is guaranteed by \cite[Theorem 1.2]{vershynin2012close} in the moment-constrained case. Hence the results follow.
\end{proof}

\section{Simulation studies}
\label{sec:simulation}
\subsection{Synthetic Data}
We evaluate the performance guarantees of our algorithm based on the probability of {\it exact support recovery}, i.e., when the algorithm correctly identifies all non-zero entries of $\bs\beta^*$. Once the support is identified, estimating coefficients becomes much less challenging as the number of unknowns becomes relatively smaller. This quantity is estimated over a batch of runs of the algorithm by computing the ratio of the number of exact recovery events to the number of total runs. We consider comparing our method Projected-STG (Proj-STG) against LASSO \cite{tibshirani1996regression}, OMP \cite{omp}, Randomized OMP (Rand-OMP) \cite{randomp}, SCAD \cite{scad}, and the original STG \cite{yamada20a}. The success rate of our method is compared against the algorithms mentioned above in two empirical studies -- (a) as the number of data points $N$ grows large, and (b) as the sparsity level $K$ grows large. One should expect that in (a), the target probabilities will grow monotonically to 1 as $N$ grows large, a direct consequence of \prettyref{thm:support-recovery} when non-zero coordinates of $\bs\beta^*$ are bounded away from 0. In (b), the success rate should decrease as $K$ takes larger values (keeping everything else fixed) which signifies difficulty recovering sporadic null coordinates when the signal is strong. As we will see below, our simulations identify these behaviors as well. We study the performance of our algorithm using the design matrices listed below.

{{\bf Gaussian design matrix:}} In both simulations, we fixed $D=64$ and varied either $N$ or $K$. For each run, we re-sampled the dataset in the following way. We generated the design matrix $\mX \in \mathbb{R}^{N \times D}$ by drawing each of $X_{ij}$'s independently from a standard Gaussian distribution. To generate the signal $\bs\beta^*$, we first assigned $0$ value to $D - K$ randomly chosen coordinates. Next, the nonzero values of $\bs\beta^*$ were drawn from a symmetric Bernoulli distribution with values $1$ or $-1$. Finally, the target vector $\vy$ is simulated using \eqref{eq:main-model} separately with two different values of $\sigma=0.5,1$. We run $100$ simulations of \prettyref{algo:proj-STG} with $\tau=0.5$, using 20 Monte Carlo samples to approximate the expectations in $\mQ,\vq$ in \prettyref{algo:proj-STG}. For updating values of $\bs\mu$, instead of fixed step size ($\gamma$ in \prettyref{algo:proj-STG}) we use the Adam optimizer \cite{kingma2014adam} which stochastically chooses an improved step size. For LASSO, the regularization parameter is set to $\lambda_{N,0} = \sqrt{\frac{2 \sigma^2 \log (D-K) \log (K)}{N}}$ as suggested in \cite[Section VII]{wainwright09}.
For Projected-STG, we use the regularization parameter $\lambda_N= C \lambda_{N,0}$, where $C$ is a constant selected using a cross validated grid search in $[0.1, 10]$. 

In the first simulation, we fixed $K=\lceil 0.40 D^{0.75} \rceil=10$ (as suggested in \cite[Section I(B)]{wainwright09}) and varied $N$ within the interval $[10,100]$. \prettyref{fig:nvar} presents the estimated probabilities of exact support recovery, along with corresponding $90$ percent confidence bands (computed via bootstrap), for Projected-STG, LASSO, OMP, Rand-OMP, and SCAD. Given any estimate $\hat{\bs\beta}$ we choose $K$ of its coordinates with the largest magnitudes and use it as our estimated support. As demonstrated in this figure, the Projected-STG requires fewer samples for perfect support recovery when compared with the baselines. Moreover, as suggested by these results, our method performs significantly better than the other methods when $\sigma$ is large (i.e., the signal-to-noise ratio is small).

In the second simulation, we investigate the impact of varying the sparsity $K$ on the success rate of the estimators. We retain the same experimental setup as the previous experiment and fix the number of observations $N=40$ such that the system is underdetermined (recall that $D=64$). We varied $K$ within $[1,25]$. \prettyref{fig:kvar} presents the probability of success along with $90$ percent confidence bands (computed via bootstrap). The figure suggests that the proposed model's outcomes can substantially improve when the sparsity level is high (large values of $K$).
\begin{figure}[htb!]
	\begin{center}
		\includegraphics[width=0.45\textwidth]{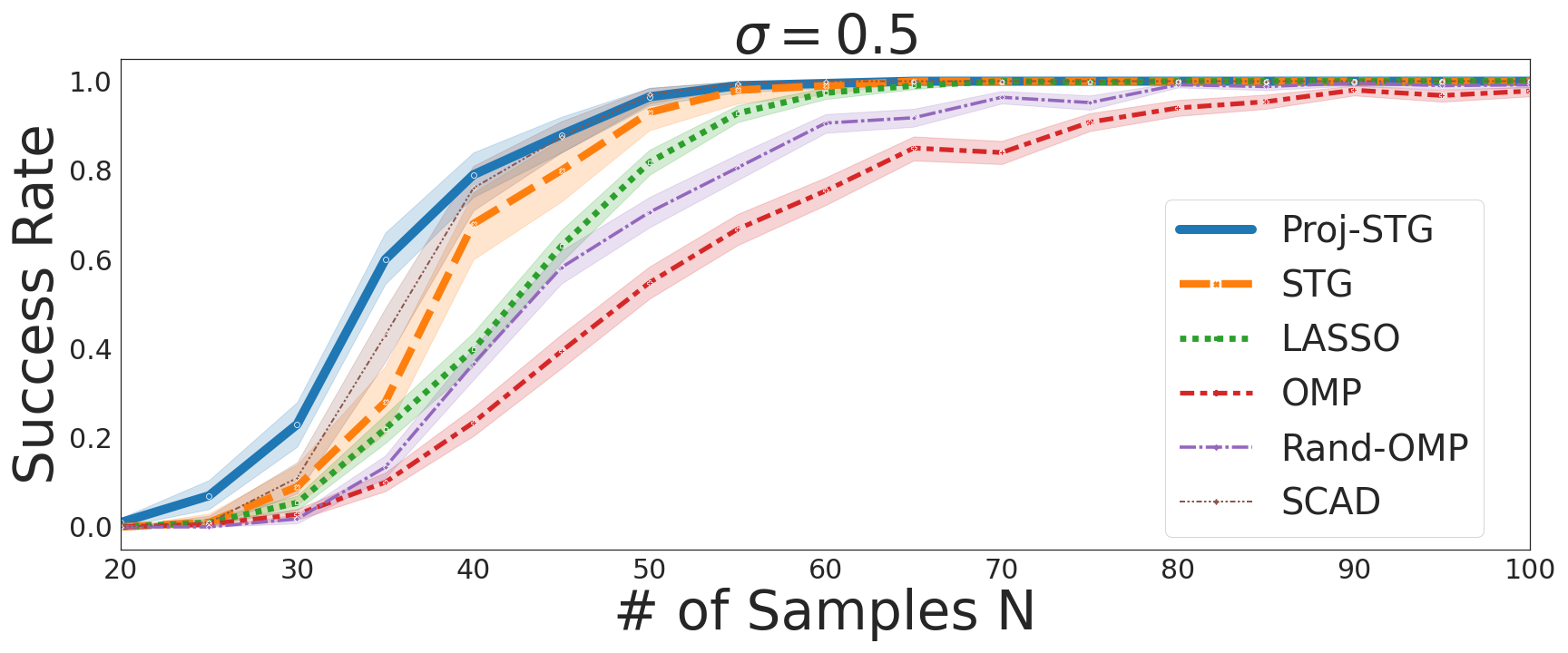}
		\includegraphics[width=0.45\textwidth]{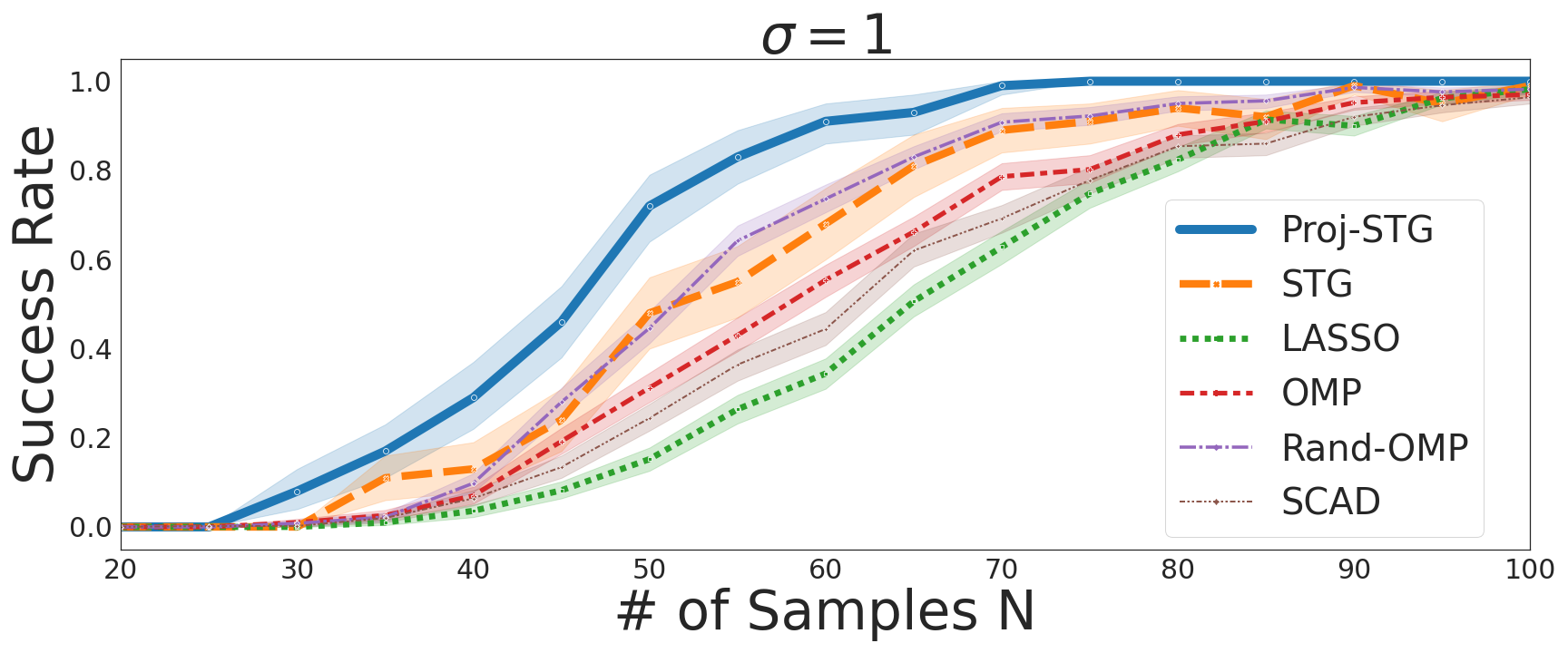}
		\caption{Probability of success in support recovery vs. number of samples $N$.}
		\label{fig:nvar}
	\end{center}
\end{figure} 

\begin{figure}[htb!]
	\begin{center}
		\includegraphics[width=0.45\textwidth]{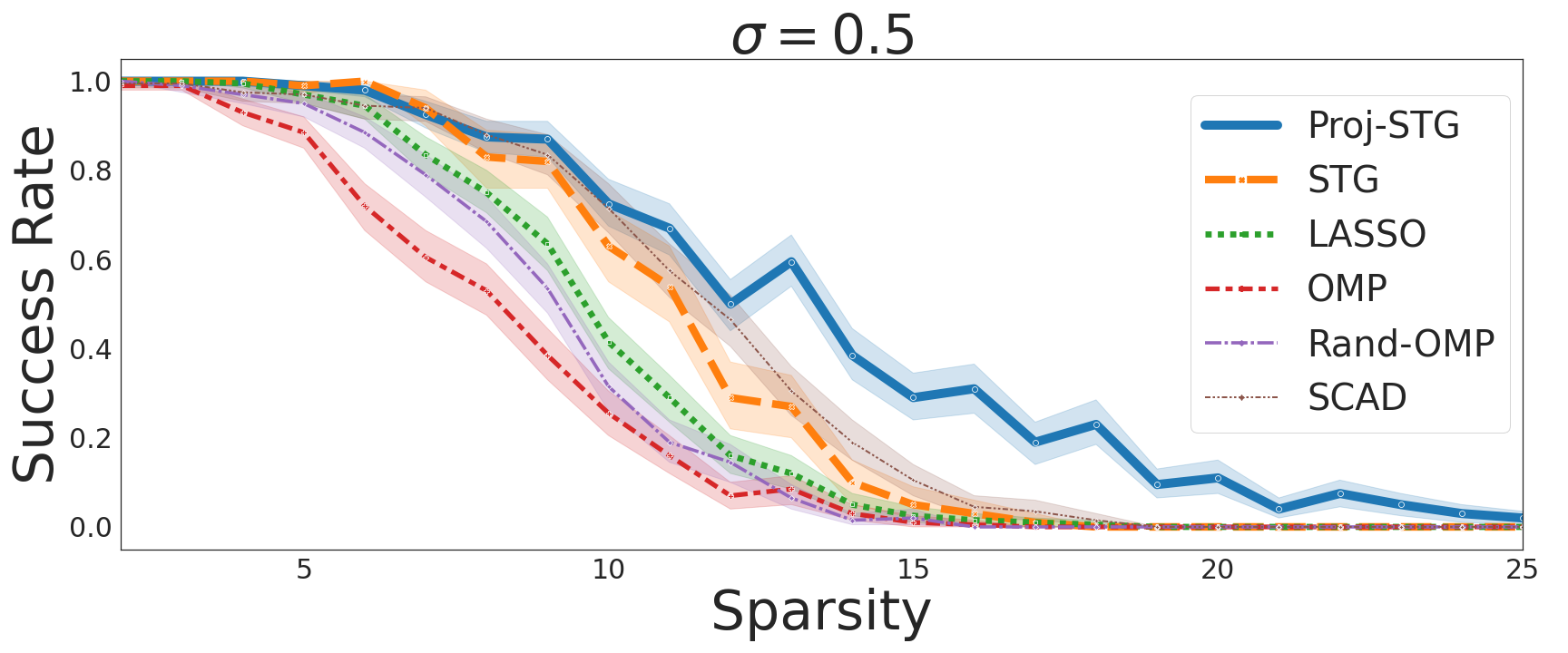}
		\includegraphics[width=0.45\textwidth]{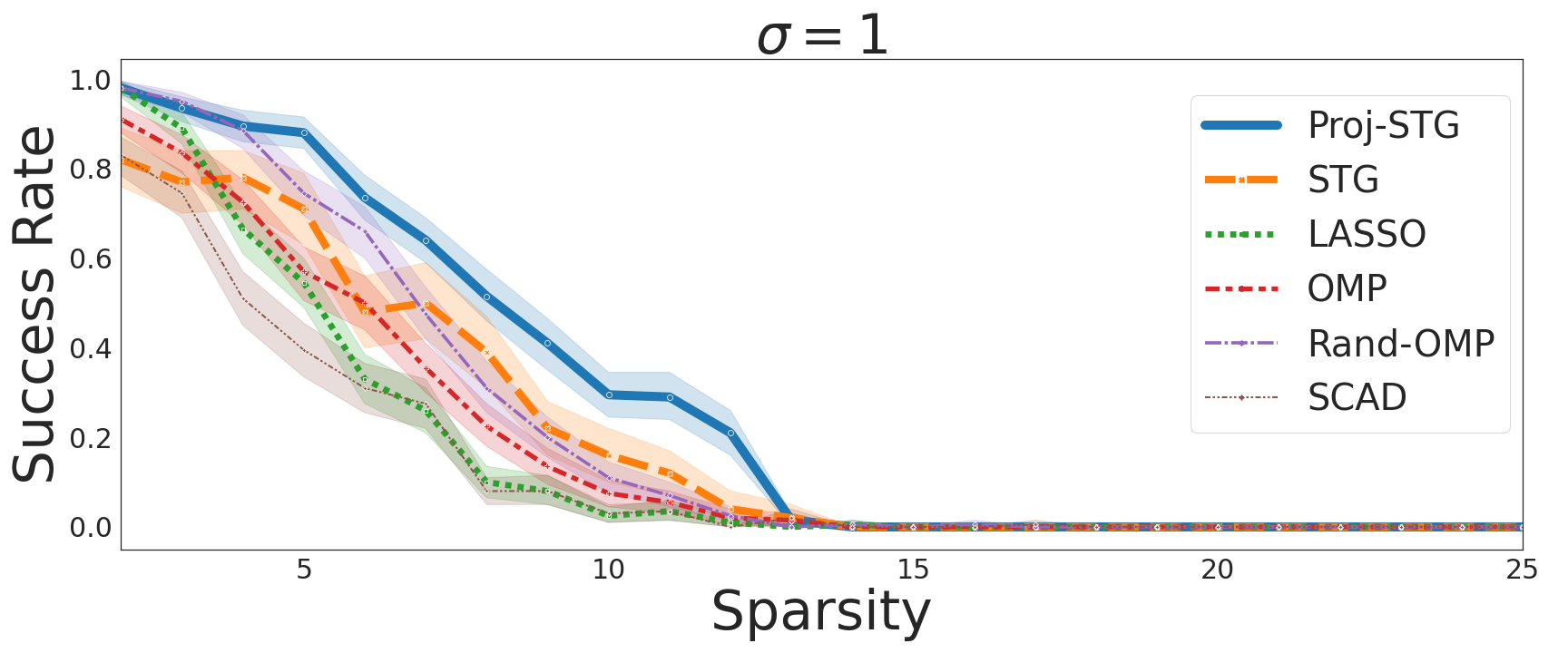}
		\caption{Probability of success in support recovery vs. sparsity level $K$.}
		\label{fig:kvar}
	\end{center}
\end{figure}

{{\bf Bernoulli design matrix}:} to demonstrate that the proposed approach can be applied to other design matrix structures, we further consider design matrices generated via the Bernoulli distribution. Specifically, we create the design matrix $\mX \in \mathbb{R}^{N \times D}$ by drawing each of $X_{ij}$'s independently from a fair Bernoulli distribution with values $\{-1,1\}$. Then, we repeat the first experiment and evaluate the effect of the number of samples on the probability of support recovery. \prettyref{fig:nvar-bin} presents the probability of success along with $90$ percent confidence bands for the Bernoulli design matrix.  

\begin{figure}[htb!]
	\begin{center}
		\includegraphics[width=0.45\textwidth]{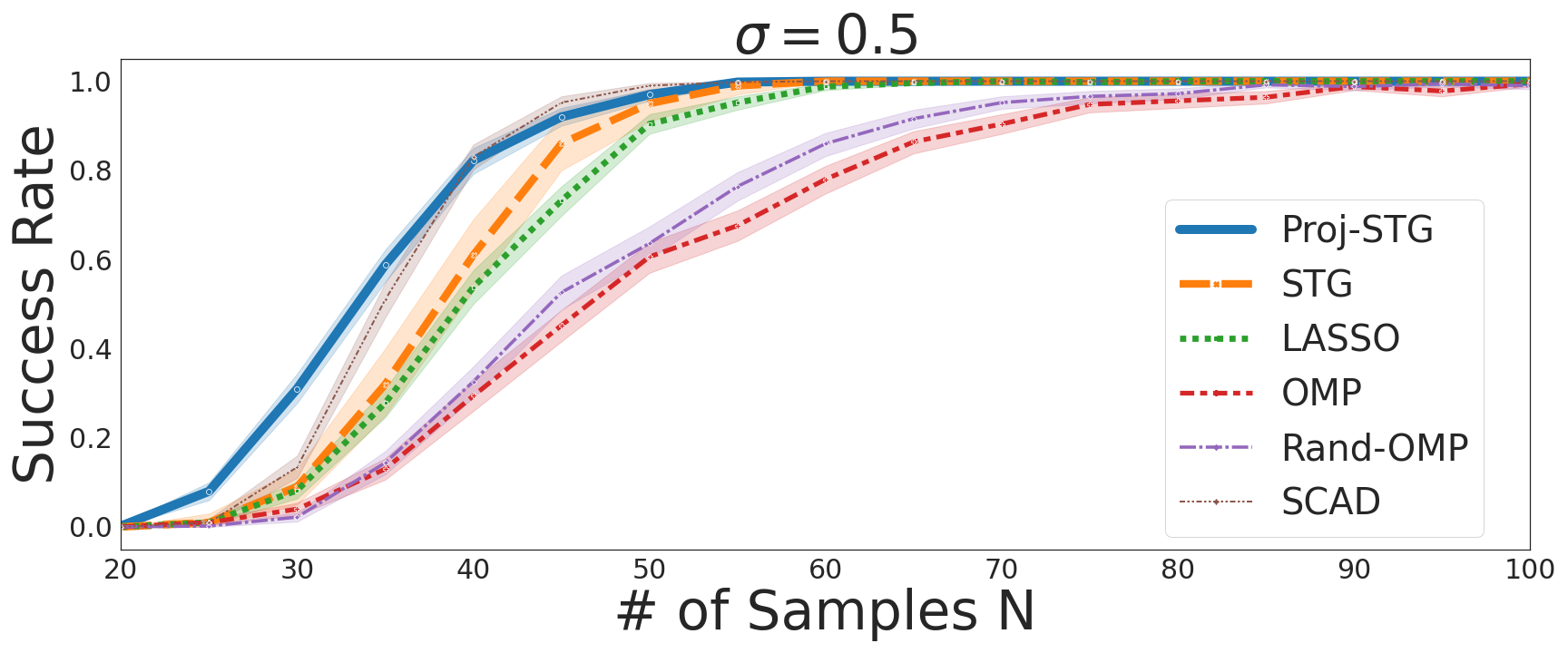}
		\includegraphics[width=0.45\textwidth]{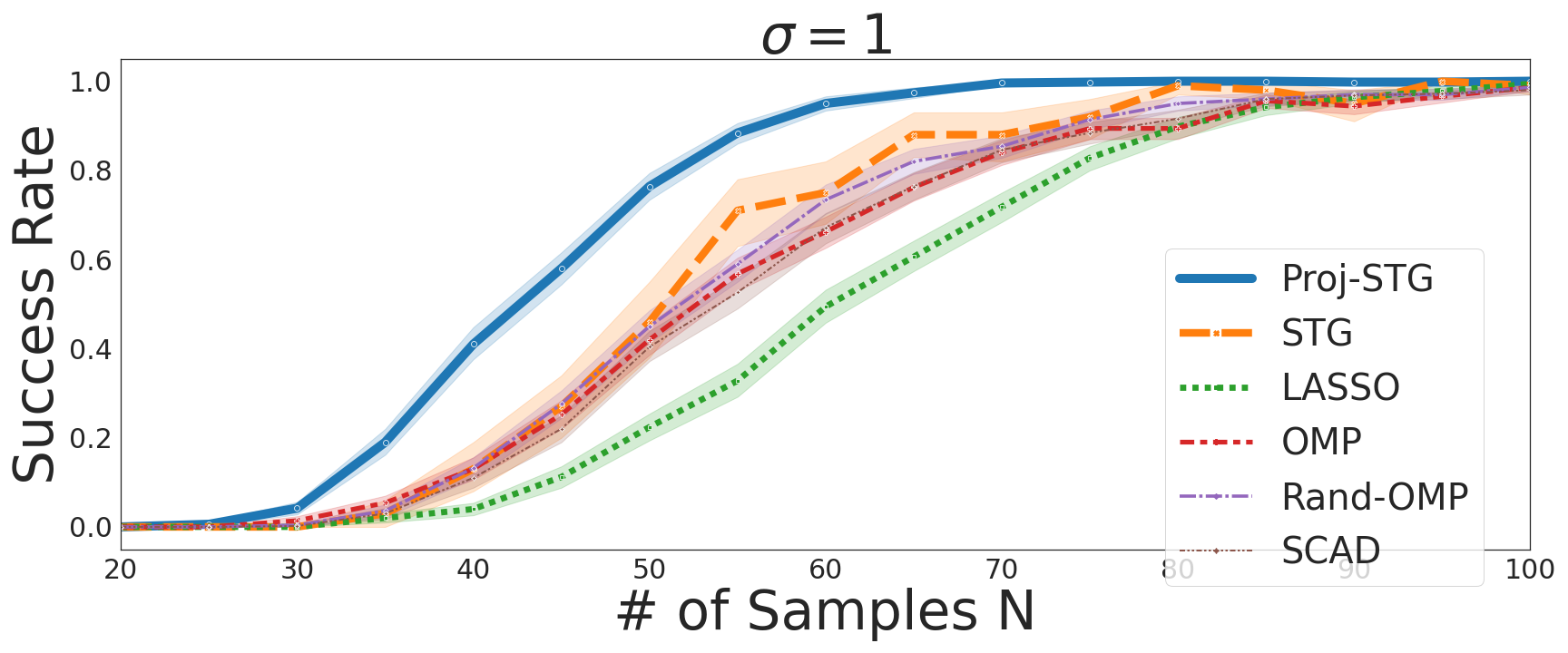}
		\caption{Probability of success in support recovery vs. number of samples.}
		\label{fig:nvar-bin}
	\end{center}
\end{figure}

\begin{figure}[htb!]
	\begin{center}
		\includegraphics[width=0.45\textwidth]{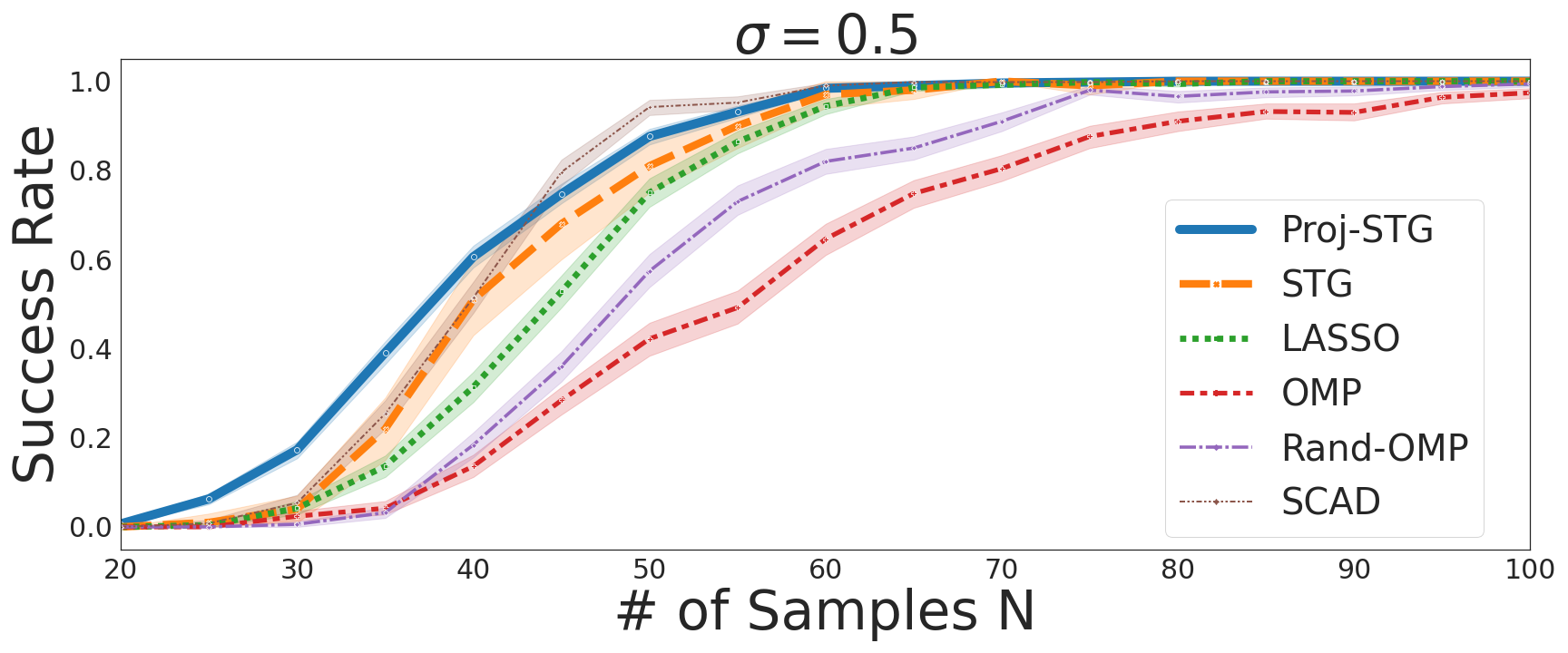}
		\includegraphics[width=0.45\textwidth]{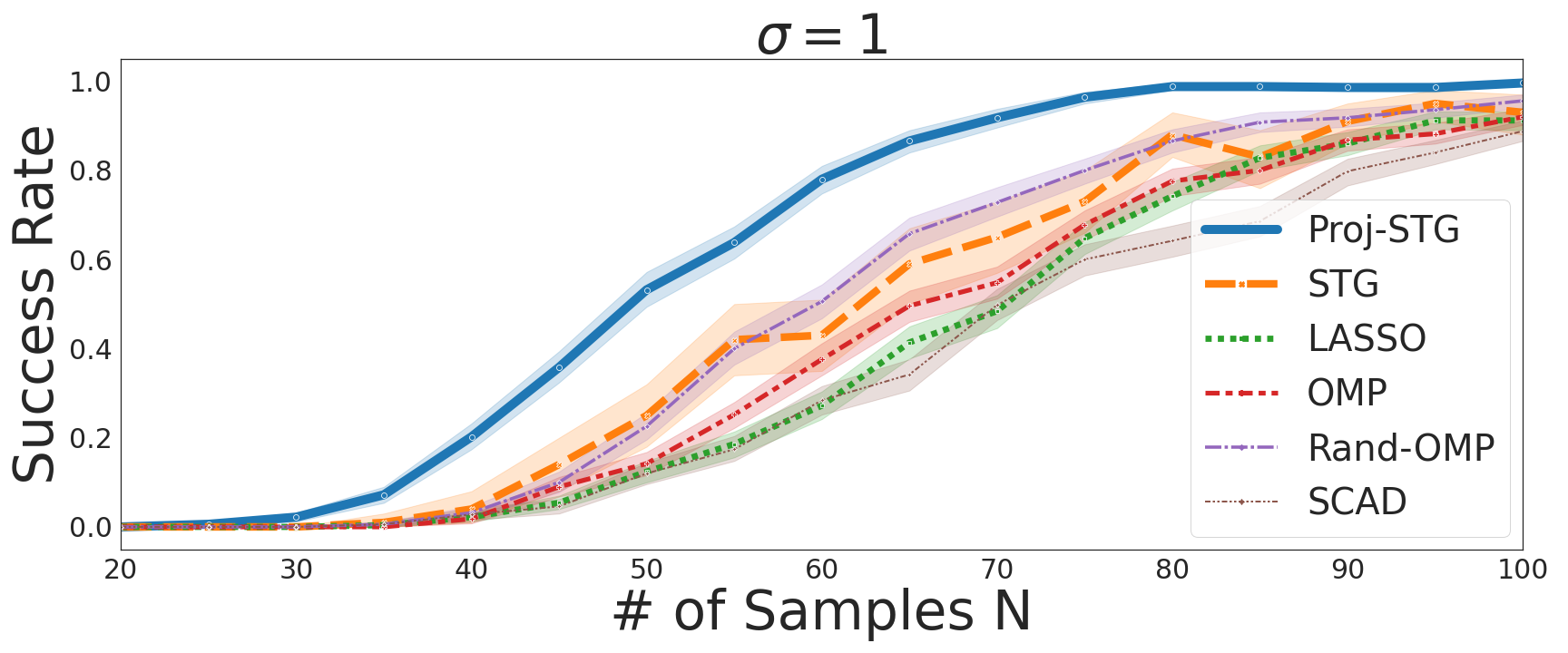}
		\caption{Probability of success in support recovery vs. number of samples.}
		\label{fig:nvar-cov}
	\end{center}
\end{figure}  

	

{{\bf Correlated design matrix:}}
In real-world data, the observed features are typically correlated. This typically makes the task of support recovery more challenging. We use a correlated design matrix to evaluate how the performance of the proposed method is affected by the correlation between features. Specifically, we construct a Toeplitz covariance matrix $\myvec{\Sigma}$, with values $\Sigma_{l,m}=\rho^{\|l-m\|}$, and $\rho=0.3$. Then, we draw the values of $X_{i,j}$ from $N(0,\myvec{\Sigma})$. We use the same settings as in the previous examples to generate $\myvec{\beta}^*$ and the additive noise, and we evaluate support recovery for different values of $N$. As indicated by \prettyref{fig:nvar-cov}, As expected, Proj-STG requires more samples for perfect recovery when the observed features are correlated. The proposed approach outperforms all baselines when the injected noise is strong.  

\begin{figure}
	\centering
	\includegraphics[width=0.45\textwidth]{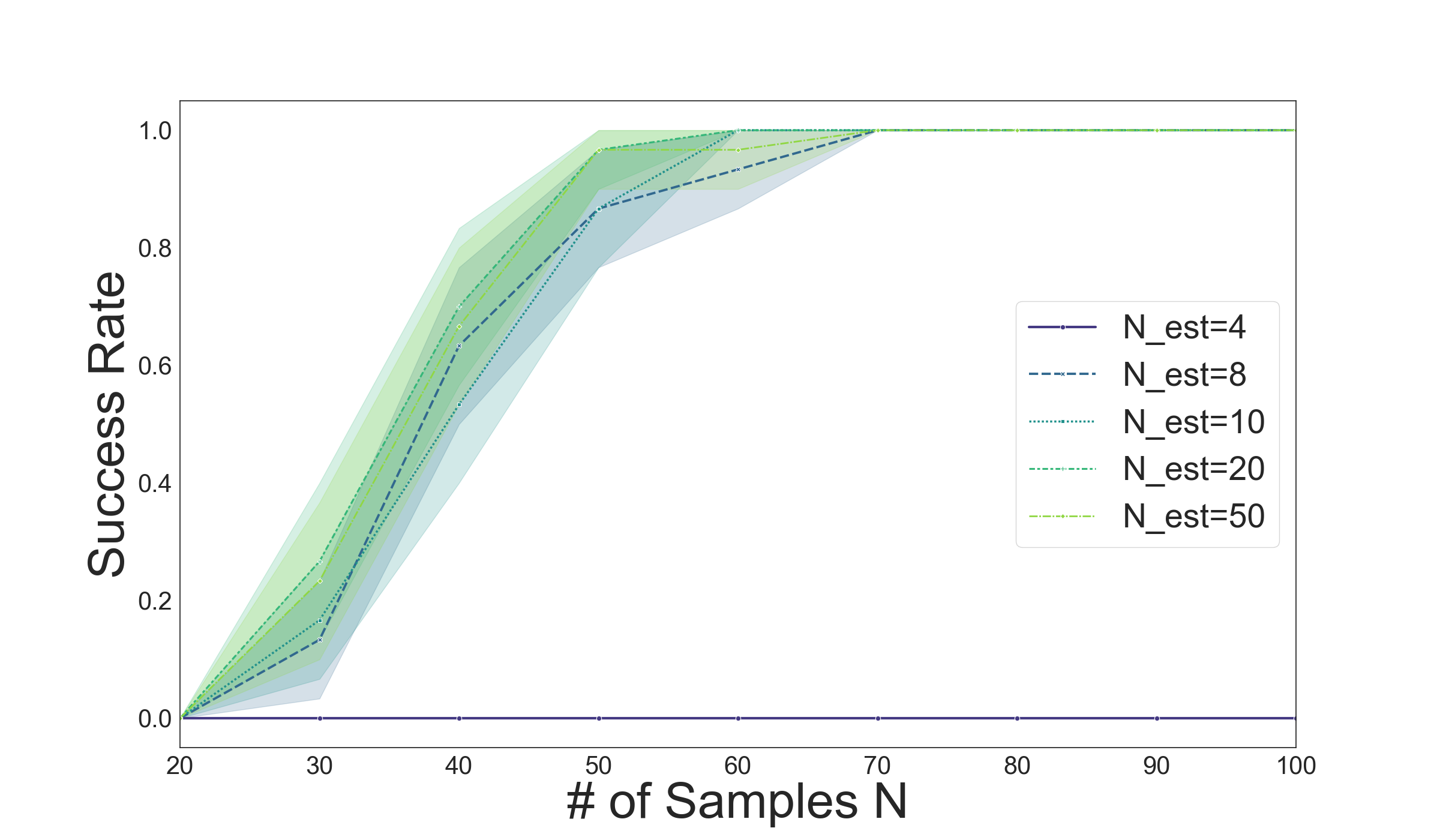}
	\caption{Probability of success vs. number of samples, for Projected-STG where we vary the number of samples for Monte Carlo estimators (``\texttt{N\_est}'') used for $\myvec{Q}$ and $\myvec{q}$ in step 1 of Algorithm \ref{algo:proj-STG}.}
	
	\label{fig:n_est_varies}
\end{figure}

{{\bf Hyperparameter tuning}:} Additionally, we studied how the number of Monte Carlo samples ($M$ in \prettyref{algo:proj-STG}) in our algorithm affects the performance.
Monte Carlo estimates are used to approximate the expectation of the gates $\vz({\bm\mu})$.
We use $D=64$ and $K=10$, and vary the number of observations $N$ from 20 to 110.
We repeat this setting with varying Monte Carlo samples from 4, 8, 10, 20, 50, and record the success rate of support recovery.
As shown in Figure \ref{fig:n_est_varies},
we found that the performances of our algorithm become indistinguishable when the number of estimates is not too small ($\geq 8$).

\subsection{Real Data}
Next, we benchmark our algorithm on two real regression datasets. We compare against the same algorithms as in the synthetic simulations (LASSO, OMP, Rand-OMP, SCAD). The two datasets are obtained from the Laboratory of Artificial Intelligence and Computer Science at the University of Porto (LIACC) and referred to respectively as AILERON -- an F-16 control problem consisting of flight data as the input variables and aileron control as the output variable, and TRIAZINES -- a quantitative structure-activity relationship regression task \cite{hirst1994quantitative} consisting of triazine structural attributes as input and molecular activity as output. To evaluate the quality of selected features, the aforementioned Success Rate is relatively uninformative as full support recovery is too difficult a bar for success. Instead, we use the True Positive Rate (TPR) and False Discovery Rate (FDR) scores, defined as
\begin{align}
	\text{TPR} &= \frac{TP}{TP + FN}\\
	\text{FDR} &= \frac{FP}{FP + TP},
\end{align}
where $\text{TP} = \text{True Positives}$, $\text{FP} = \text{False Positives}$, and $\text{FN} = \text{False Negatives}$. The error bars again denote $90\%$ confidence bounds, and we can see in Figures \ref{fig:tpr} and \ref{fig:fdr} that our algorithm equals or outperforms all competing algorithms in both measures.

\begin{figure}
	\begin{center}
		\includegraphics[width=0.45\textwidth]{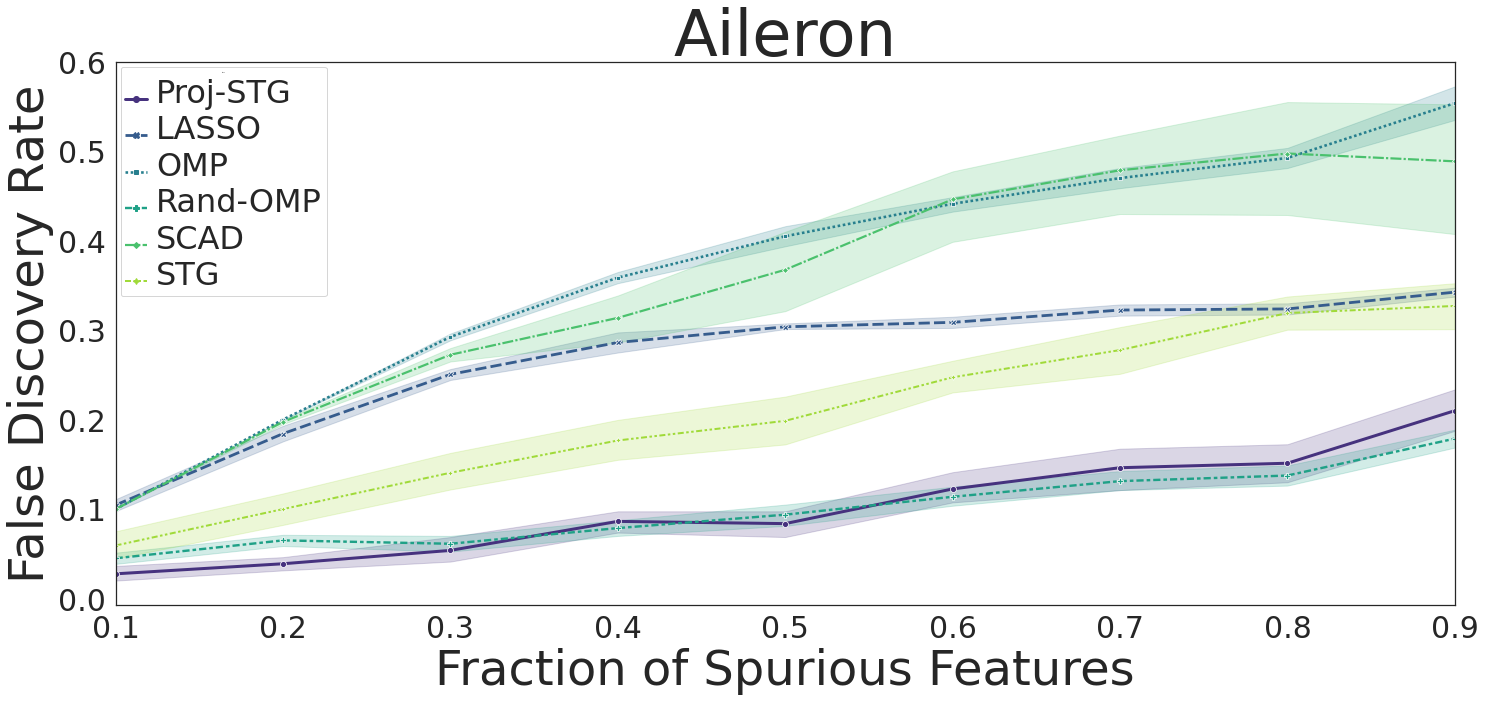}
		\includegraphics[width=0.45\textwidth]{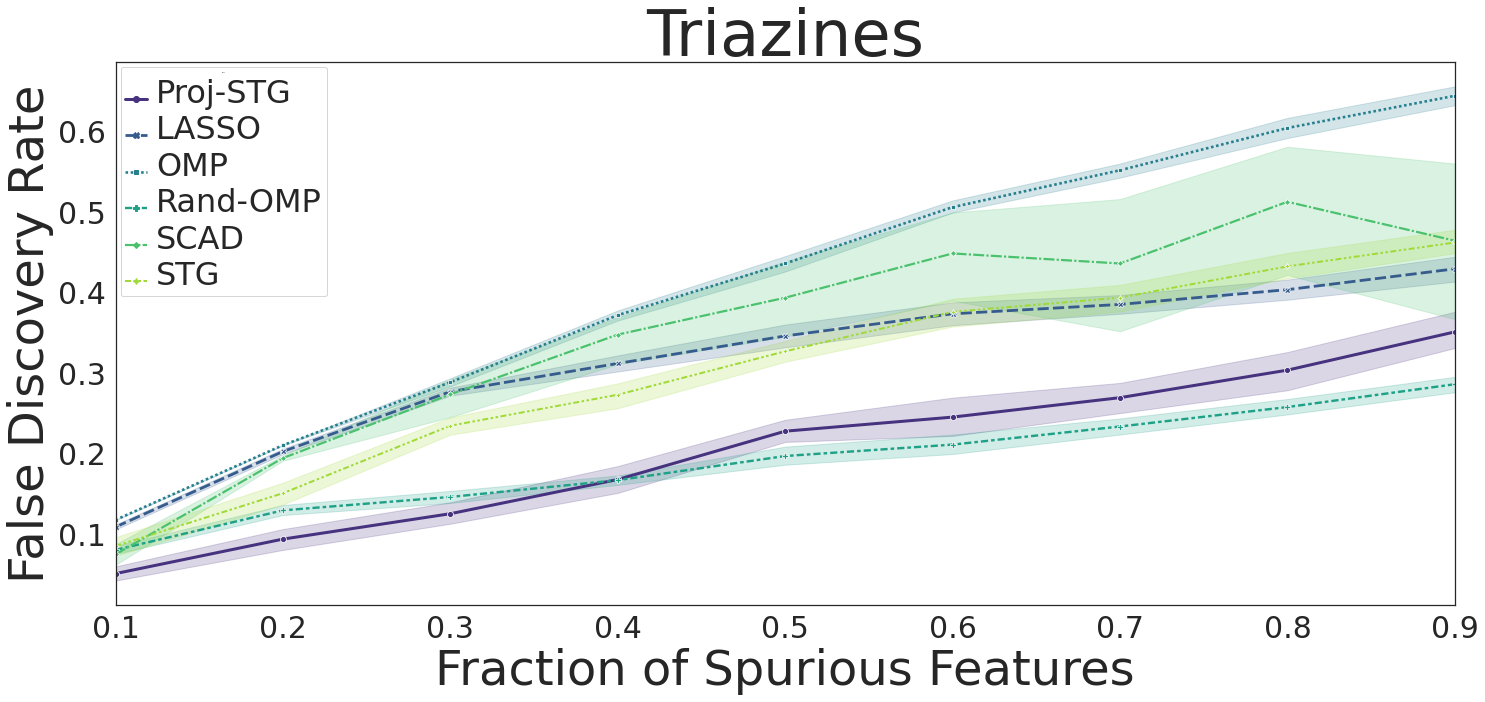}
		\caption{False Discovery Rate of Recovered Features vs. Sparsity (lower is better).}
		\label{fig:fdr}
	\end{center}
\end{figure}

\begin{figure}
	\begin{center}
		\includegraphics[width=0.45\textwidth]{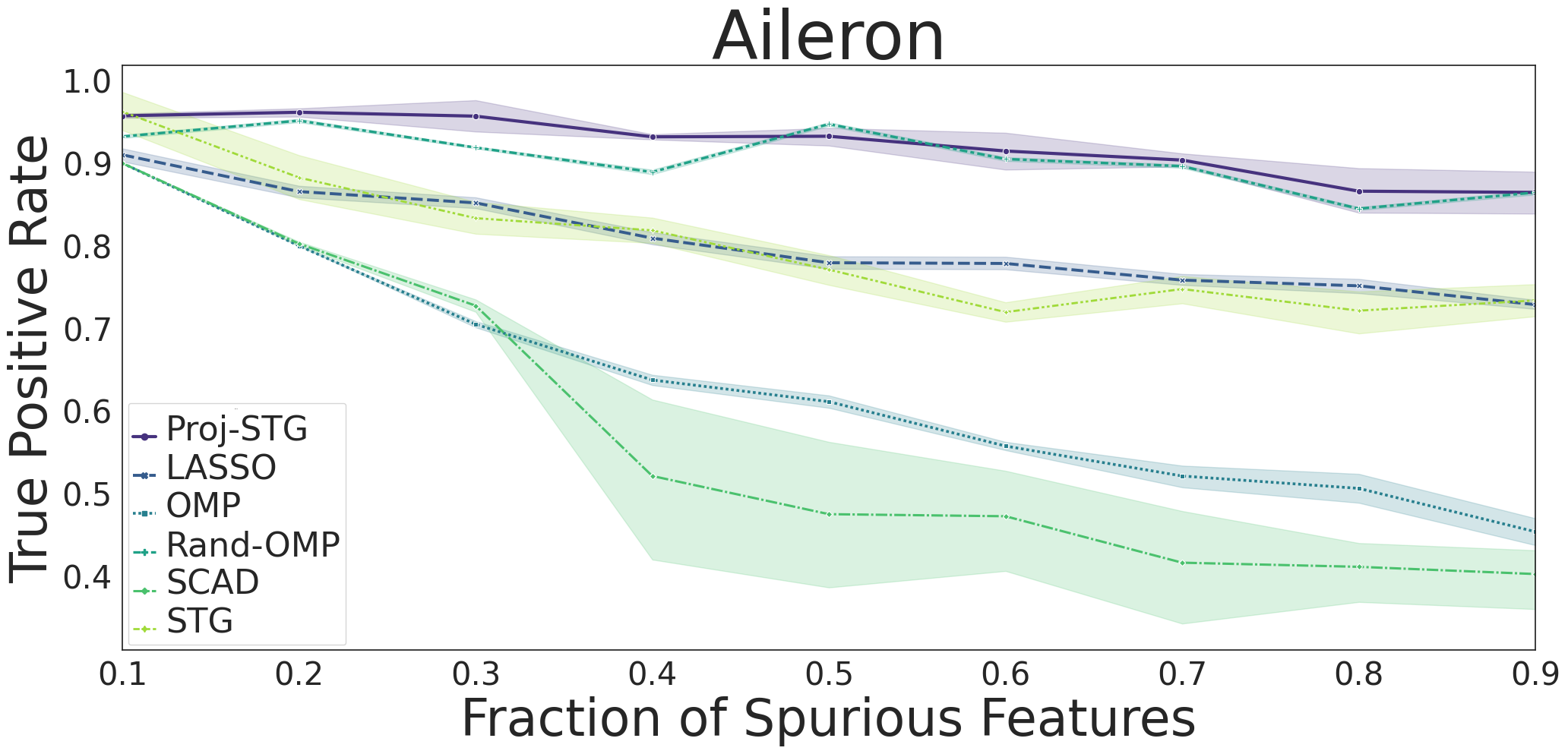}
		\includegraphics[width=0.45\textwidth]{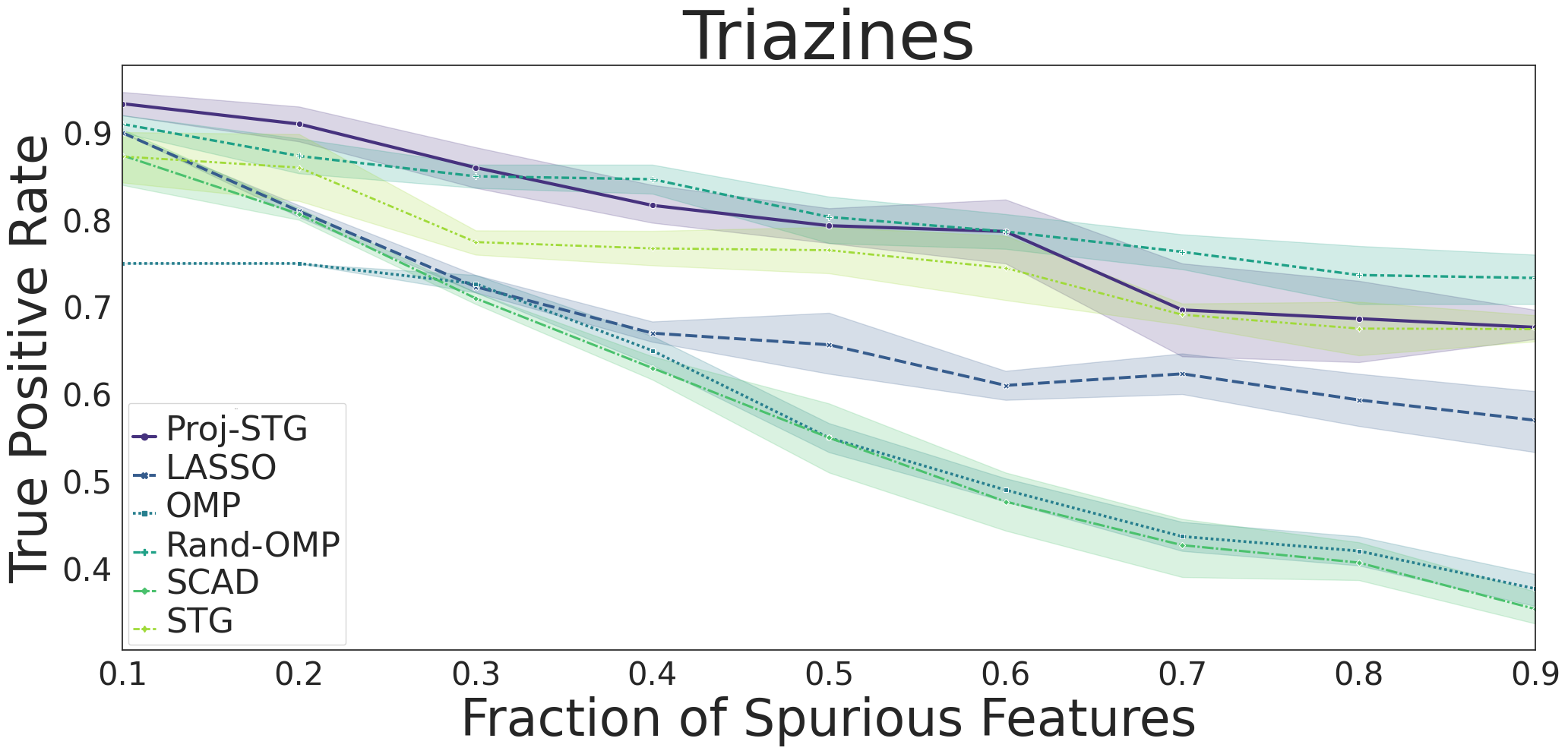}
		\caption{True Positive Rate of Recovered Features vs. Sparsity (higher is better).}
		\label{fig:tpr}
	\end{center}
\end{figure}

Our code is available on  \href{https://github.com/lihenryhfl/projection_stg}{github}.

\bibliographystyle{alpha}
\bibliography{references}

\end{document}